\numberwithin{equation}{section}
\newtheorem{theorem}{Theorem}
\newtheorem{lemma}[theorem]{Lemma}
\newtheorem{proposition}{Proposition}[theorem]
\DeclareMathOperator\erfi{erfi}
\DeclareMathOperator\erf{erf}
\begin{document}

\title[Self-Similar Solutions to the Mullins' Equation]{Self-Similar Grooving Solutions to the Mullins' Equation}

\author{Habiba V.~Kalantarova$^\ast$$^1$}
\email{$^1$kalantarova@campus.technion.ac.il}
\author{Amy Novick-Cohen$^\ast$$^2$}
\email{$^2$amync@technion.ac.il}
\address{$^\ast$Department of Mathematics, Technion-IIT, Haifa 32000, Israel}

\date{\today}

\begin{abstract}
In 1957, Mullins proposed  surface diffusion motion  as a model for
 thermal grooving. By adopting a small slope approximation,
 he reduced the model to the   \textit{Mullins' linear surface diffusion equation,}
 \begin{equation} \nonumber
 ({\rm{ME}})\quad\quad  y_t + B y_{xxxx}=0,
 \end{equation}
known also more simply as the \textit{Mullins' equation}. Mullins sought self-similar solutions to (ME) for planar initial conditions, prescribing  boundary conditions at   the thermal groove, as well as far field decay. He found explicit series solutions which are routinely used in analyzing thermal grooving to this day.

While (ME) and the small slope approximation are physically reasonable, Mullins' choice of boundary conditions is not always appropriate. Here  we present an in depth study of self-similar solutions to  the Mullins' equation  for general self-similar boundary conditions, explicitly identifying four linearly independent solutions defined on $\mathbb{R}\setminus\{0\}$; among these four solutions,
two exhibit unbounded growth and two exhibit asymptotic decay, far from the origin. We indicate how the full set of solutions can be used in analyzing the effective boundary conditions from experimental profiles and in evaluating the governing  physical parameters.
\end{abstract}

\maketitle

\section{Introduction}\label{sec:intr}

Motion by surface diffusion
\begin{equation} \label{msd}
V_n = - B \triangle_s \kappa,
\end{equation}
describes a geometric motion for an evolving surface. In (\ref{msd}),  $V_n$ and $\kappa$ denote, respectively, the normal velocity and the mean curvature of the evolving surface, $\triangle_s$ denotes the Laplace-Beltrami operator known also as the  surface Laplacian, and $B$ is the \textit{Mullins'  coefficient}. Motion by surface diffusion, as well as motion by mean curvature, were first proposed by Mullins  \cite{Mullins1956,Mullins1957} in  modeling the evolution of microstructure in polycrystalline materials. Polycrystalline materials contain numerous crystals or \textit{grains}, separated by grain boundaries, and there is a tendency for thermal grooves to form where interior grain boundaries intersect the exterior surface of the polycrystalline specimen. The
  evolution of the microstructure, including the phenomenon of thermal grooving, are of quite general interest,  since the microstructure and grooving in particular are highly
  influential in determining the strength,  the stability, as well as many other properties of polycrystalline materials.

  In using (\ref{msd}) to  model the development of thermal grooves, various possible effects have been neglected, such as bulk diffusion \cite{Hardy1991},    surface energy anisotropy \cite{Davi1990}, \cite{Klinger2001}, \cite{Xin2003}, as well as  evaporation and condensation \cite{Mullins1957}.
  The Mullins' coefficient is frequently prescribed as  $B= D_s \gamma_{ext}\, \Omega^2\nu/(kT)$, where
 $D_{s}$ is the surface diffusion coefficient, $\gamma_{ext}$ is the surface-free energy per unit area of the exterior surface, $\Omega$ is the atomic volume,  $\nu$ is the number of mobile atoms per unit area, $k$ is the Boltzmann constant and   $T$ is the temperature.

\bigskip
In studying the formation of thermal grooves, it is constructive to focus on the normal cross-section to some particular thermal
groove. Under the assumption that the height of the exterior surface can be described in the normal cross-section as the graph of function, $y=y(t,x)$,
relative to  an initially planar exterior surface, $y(0,x)\equiv 0,$  and
 that there is little out of plane variation in the shape of the thermal groove relative to the cross-sectional plane, then (\ref{msd}) implies that
\begin{equation}\label{mllnsoe}
y_{t}=-B[\kappa_{x}(1+y_{x}^{2})^{-1/2}]_{x}, \kappa=y_{xx}(1+y_{x}^{2})^{-3/2},\ x\in \mathbb{R}\setminus \{0\},\ t>0.
\end{equation}
In writing (\ref{mllnsoe}), it has  been implicitly assumed that the thermal groove is initially located at $x=0$ and maintains its location there, and that there are no
additional effects  influencing the shape of the exterior surface.

 To obtain a complete problem formulation for (\ref{mllnsoe}), it is reasonable to impose conditions at $x=0$ as well as far field conditions, in addition to the initial planarity condition $y(0,x)\equiv 0$. Mullins \cite{Mullins1957} effectively imposed symmetry with respect to $x=0$, implying that the grain boundary attached below the thermal groove is constrained to lie along the $y-$axis and remain
 orthogonal to the planar surface $y\equiv 0$  for $t \ge 0$.
In accordance with balance of mechanical forces (Herring's law), he required that
$y_x(t,0^+):=\lim_{x\rightarrow0^{+}}y_{x}(t,x)=m/\sqrt{4-m^2}$, where $m=\gamma_{gb}/\gamma_{ext}$ and $\gamma_{gb}$, $\gamma_{ext}$ denote, respectively, the surface energies of the grain boundary and of the exterior surface. Zero mass flux along the thermal groove was assumed. Noting that the resultant problem was non-trivial,  Mullins observed that typically $0< m=\gamma_{gb}/\gamma_{ext}< 1/3.$ This allowed him to treat $m$ as a small dimensionless parameter and to make the
 physically reasonable assumption that the slope of the exterior surface remained small at all times.

\bigskip
Based on the {small slope assumption}, Mullins \cite{Mullins1957} obtained a simpler linear problem formulation, namely, Mullins' linear surface diffusion equation
\begin{equation}\label{b2}
y_{t}+B y_{xxxx}=0,  \quad x\in \mathbb{R}\setminus \{0\},\quad t>0,
\end{equation}
often referred to more simply as the \textit{Mullins' equation},  (ME), together with the initial condition
\begin{equation}\label{icb2}
y(0,x)=0, \quad  x\in \mathbb{R},
\end{equation}
the boundary conditions at $x=0$,
\begin{equation} \label{bcb2}
\lim_{x\rightarrow0^{\pm}}y_x(t,x)=\pm m/2,\  \lim_{x\rightarrow0^{\pm}}y_{xxx}(t,x)=0, \quad t>0,
\end{equation}
as well as far field decay.\footnote{In \cite{Mullins1957} the assumption is made that the "solution $\ldots$ behaves properly at infinity."}
Mullins sought symmetric self-similar solutions of the form
 \begin{equation} \label{ssform}
y(t,x) = (Bt)^{1/4} Z(x/(Bt)^{1/4}),
\end{equation}
where $Z=Z(u)$ satisfies
\begin{equation}\label{rm34}
Z^{(4)}(u)-\frac{1}{4}uZ'(u)+\frac{1}{4}Z(u)=0,\quad u\in\mathbb{R},
\end{equation}
for the problem prescribed in (\ref{b2})--(\ref{bcb2}),  guided by the form of the Laplace transform of (\ref{b2}). He obtained a power series solution with  recursively defined coefficients, and this solution implied the now classical formula for the depth
of the thermal groove as a function of time, $d(t):=y(t,0),$  namely
\begin{equation} \label{depth}
d(t)= -\frac{m (Bt)^{1/4}}{2\sqrt{2}\Gamma(5/4)},\quad t\geq 0.
\end{equation}
 Studies of this problem in the physical literature typically rely strongly  on the linear solution derived by Mullins. In \cite{Martin2009}, Martin obtained an integral representation for Mullins' solution by using Fourier cosine transforms, which led him to conclude that Mullins' solution exhibited far field decay to planarity.

\bigskip

Often Mullins' assumptions regarding the accompanying boundary conditions are not overly accurate.
 Possible  concerns in this direction include the following:
 The underlying grain boundary may not remain vertical due to internal motion of the grain boundaries, hence the symmetry assumption may not be valid. Often there
 is some amount of mass flux along the grain boundary which reaches and interacts with the thermal groove, so the vanishing  mass flux assumption may not be realistic, \cite{Amram2014}.
 Since all specimens are necessarily of finite extent,
 far field planarity is not obvious, and it often is of interest to analyze the development of thermal grooves which are not well isolated from their
 surroundings. Accordingly with these issues in mind, we return in this paper to consider self-similar solutions to (ME) on $(t,x)\in (0, T)\times \mathbb{R}\setminus\{0\}$ with more general boundary conditions, without explicitly imposing far field decay or initial planarity.

We begin by treating the  resultant more general problem by making  use of the theory of generalized hypergeometric differential equations (GHDE), \cite{NIST}, to demonstrate that all self-similar solutions to \eqref{b2} of the form \eqref{ssform} may be expressed as
\begin{equation}\label{d41}
y(t,x)=(Bt)^{1/4} \sum_{i=0}^{3} C_i\, z_i(u), \quad\quad u=x/(Bt)^{1/4}, \quad C_i \in\mathbb{R},
\end{equation}
where
\begin{eqnarray}
&z_{1}(u)=\prescript{}{1}{F}_{3}^{}(-\frac{1}{4};\frac{1}{4},\frac{1}{2},\frac{3}{4};\frac{u^{4}}{256}),\quad  &z_{2}(u)=u, \nonumber\\[2ex]
&z_{3}(u)=u^{2}\prescript{}{1}{F}_{3}^{}(\frac{1}{4};\frac{3}{4},\frac{5}{4},\frac{3}{2};\frac{u^{4}}{256}),\quad  & z_{4}(u)=u^{3}\prescript{}{1}{F}_{3}^{}(\frac{1}{2};\frac{5}{4},\frac{3}{2},\frac{7}{4};\frac{u^{4}}{256}),\nonumber
\end{eqnarray}
and the functions $\prescript{}{1}{F}_{3}^{}(a_1;b_1,b_2,b_3;\cdot)$ with $a_1,\, b_1,\, b_2,\, b_3 \in \mathbb{R}$ denote generalized hypergeometric functions.
 The functions $\{z_i(u)\}_{i=1}^{i=4}$ defined above are linearly independent entire functions which satisfy \eqref{rm34}; moreover,  $z_1(u)$, $z_3(u)$ are even  and
 $z_2(u)$, $z_4(u)$  are odd.  It can also be readily shown that
 \begin{equation} \label{zij}
 z_i^{(j-1)}(0)=\delta_{i\,j} (j-1)!, \quad i,j=1,2,3,4.
 \end{equation}
 From (\ref{ssform}), (\ref{d41}), (\ref{zij}), it follows that
 \begin{equation} \label{bcz}
 C_{i}=\frac{1}{(Bt)^{(2-i)/4} (i-1)!}\frac{\partial^{(i-1)} y(t,0)}{\partial x^{(i-1)}} , \quad i=1,2,3,4.
 \end{equation}

Recalling  that by assumption, in our geometry a thermal groove is forming at $x=0$, which effectively reflects the development of a singularity, 
 it is reasonable to consider the behavior of solutions on either side of the thermal groove separately. This leads us to define for $t>0$ 
 \begin{equation}\label{solution_lin}
   y(t,x) =
  \begin{cases}
    (Bt)^{1/4}\sum_{i=1}^{4} C_{i}^{+} z_i(u),      & \quad C_{i}^{+}\in\mathbb{R},\ x>0,\\[1ex]
    (Bt)^{1/4}\sum_{i=1}^{4} C_{i}^{-} z_i(u),      & \quad C_{i}^{-}\in\mathbb{R},\ x<0,
  \end{cases}
\end{equation}  
where $u=\frac{x}{(Bt)^{1/4}}$ and
 \begin{equation} \label{bc_lin}
C_{i}^{\pm}=\frac{1}{(Bt)^{(2-i)/4} (i-1)!} \lim_{x\rightarrow 0^{\pm}}\frac{\partial^{(i-1)} y(t,x)}{\partial x^{(i-1)}},\quad i=1, 2, 3, 4.
 \end{equation}
Note in particular that it follows from (\ref{bc_lin}) that the coefficients in (\ref{solution_lin}) are directly proportional to the derivatives of the surface profile at
the thermal groove. This feature makes the solution representation (\ref{solution_lin}) useful for data fitting.

An alternative approach to solving the Mullins' equation is via Laplace transform methods under the  assumption of initial planarity and general boundary conditions at zero in accordance with the self-similar form \eqref{ssform}. Proceeding in this fashion yields
four linearly independent self-similar solutions of the form \eqref{ssform}, which we denote by $\{y_i\}_{i=1}^{i=4}$. Recalling \eqref{rm34} and \eqref{d41}, it follows that the set of self-similar solutions of the form \eqref{ssform} to (ME) is spanned by four linearly independent functions. Hence each of the functions $y_{i}(t,x)$, $i=1,2,3,4,$ may be expressed as a linear combination of the functions $\{(Bt)^{1/4}z_{i}(u)\}_{i=1}^{4}$, where $u=\frac{x}{(Bt)^{1/4}}$.
Accordingly by evaluating 

\begin{equation}
\frac{\partial^{(j-1)}y_{i}(t,0)}{\partial x^{(i-1)}},\quad i, j=1, 2, 3, 4\nonumber
\end{equation}

\noindent from their Laplace transforms and taking \eqref{zij} into consideration, we find for $t>0$ and $x\in\mathbb{R}\setminus\{0\}$ that

\begin{equation}
\begin{bmatrix}
y_{1}(t,x)\\[0.3em]
y_{2}(t,x)\\[0.3em]
y_{3}(t,x)\\[0.3em]
y_{4}(t,x)
\end{bmatrix}
=(Bt)^{1/4}\begin{bmatrix}
 0    & \frac{1}{\sqrt{2}}  & -1  &\frac{1}{\sqrt{2}} \\[0.5em]
 1    & -\frac{1}{\sqrt{2}} &  0  &\frac{1}{\sqrt{2}} \\[0.5em]
 0    & \frac{1}{\sqrt{2}}  &  1  &\frac{1}{\sqrt{2}} \\[0.5em]
 1    &\frac{1}{\sqrt{2}}   &  0  &-\frac{1}{\sqrt{2}}
\end{bmatrix}
\begin{bmatrix}
 \frac{1}{\Gamma\left(\frac{5}{4}\right)}z_{1}(u)\\[0.3em]
 \frac{1}{\Gamma(1)} z_{2}(u)\\[0.3em]
 \frac{1}{2\Gamma\left(\frac{3}{4}\right)}z_{3}(u)\\[0.3em]
 \frac{1}{6\Gamma\left(\frac{1}{2}\right)}z_{4}(u)
\end{bmatrix},\nonumber
\end{equation}

\noindent where $u=\frac{x}{(Bt)^{1/4}}$. Here as in \eqref{solution_lin}-\eqref{bc_lin} we may define solutions separately on either side of the thermal groove.

It is easy to show that
\begin{equation}
y_{1}(t,x)=-y_{3}(t,-x)\quad\mbox{ and }\quad y_{2}(t,x)=y_{4}(t,-x)\nonumber
\end{equation}
for $t>0$, $x\in\mathbb{R}\setminus\{0\}$. Moreover for $t>0$, $\{y_{i}(t,x)\}_{i=1}^{i=2}$ and  $\{y_{i}(t,-x)\}_{i=3}^{i=4}$ are asymptotically flat as $x\rightarrow\infty$, and for $x>0$, $\{y_{i}(t,x)\}_{i=1}^{i=2}$ and  $\{y_{i}(t,-x)\}_{i=3}^{i=4}$ satisfy the initial planarity  condition. By examining series solution expressions for  $\{y_i(t,x)\}_{i=3}^{i=4}$ and $\{y_{i}(t,-x)\}_{i=1}^{i=2}$ for $t>0$, we show that they exhibit unbounded far field growth as $x\rightarrow\infty$, and that they do not satisfy initial planarity.

Martin \cite{Martin2009} demonstrated that it is  possible to obtain an integral representation for Mullins' solution  by using Fourier cosine transforms;
we demonstrate that it is possible to obtain two linearly independent solutions by Fourier cosine transform method, and the integral representations obtained by this approach for these solutions allow us to ascertain that both solutions tend to zero as $x\rightarrow\infty$, for fixed $t>0$.

\bigskip
Earlier we mentioned that  the solution representation given in (\ref{solution_lin}) is useful for data fitting.
By undertaking a direct statistical least squares comparison with experimental data from Amram et al., \cite{Amram2014}, we show in Section \ref{sec:df} that our results can be successfully used to fit data and to distinguish between experiments in which Mullins' boundary conditions are accurate from experiments in which other boundary conditions such as the boundary conditions proposed in Amram et al.~\cite{Amram2014}, namely
\begin{equation}\nonumber
y_x(t,0)=m/2, \quad y_{xx}(t,0)=0, \quad \lim_{x \rightarrow \infty} y(t,x)=0,\quad t>0,
\end{equation}
are more accurate. It is not difficult to verify that Mullins' solution \cite{Mullins1957} may be expressed in terms of the functions $\{y_{i}(t,x)\}_{i=1}^{4}$  as
\begin{equation}\label{msol}
\frac{m}{2\sqrt{2}}(y_{1}(t,x)-y_{2}(t,x)),
\end{equation} 
 and that the solution discussed by Amram et al. \cite{Amram2014} corresponds to
\begin{equation}\label{asol}
-\frac{m}{\sqrt{2}}y_{2}(t,x),
\end{equation}
for details see Section \ref{asymp_dec}. Essentially, our results yield a method for "reading off" the effective boundary conditions from the measurements, see \cite{Kalantarova2019data}. To the authors' knowledge, this is the first study presenting an analytical solution to  the Mullins' equation that makes such data fitting possible, which is perhaps the main advantage of our approach.

\bigskip

The paper is organized as follows. In Section 2, we state and prove our main results  regarding  the existence of a four dimensional self-similar solution to (ME), which can be expressed in terms of generalized hypergeometric solutions
 as well as via Laplace transforms, and we describe the far field behavior of these solutions. In Section 3, we briefly demonstrate how these results can be used for data fitting. In Appendix A, we discuss the generalized hypergeometric solutions, indicating in detail how a specific GHDE may be identified whose solutions yield $\{z_{i}(u)\}_{i=1}^{4}$. In Appendix B, we prove in detail the initial and far field behavior of the solutions, as well as demonstrating \eqref{msol}, \eqref{asol}.

\section{Self-similar Solutions to the Mullins' Equation}\label{sec:lt}
Mullins' linear surface diffusion equation (ME)
\begin{equation}
\label{m22}y_{t}+B y_{xxxx}=0,\quad x\in\mathbb{R}\setminus\{0\}, \quad t>0,
\end{equation}
along with the initial and boundary conditions
\begin{eqnarray}
&&\label{lic}y(0,x)=0\quad x\in\mathbb{R}\setminus\{0\},\\
&&\label{slope}\lim_{x\rightarrow0^{\pm}}y_x(t,x)=\pm\frac{m}{2},\quad t>0,
\end{eqnarray}
has the following scaling symmetry, namely, given any solution $y(t,x)$ to \eqref{m22}-\eqref{slope},
\begin{equation}
y_{\lambda}(t,x)=\lambda^{-1}y(\lambda^{4}t, \lambda x)\nonumber
\end{equation}
\noindent is also a solution of \eqref{m22}, for any $\lambda>0$. This scaling property leads one to seek similarity solutions of the form, 
\begin{equation}\label{lt4}
y(t,x)=(Bt)^{1/4}Z\left(\frac{x}{(Bt)^{1/4}}\right),
\end{equation}
\noindent see \cite{Bluman2010}, \cite{Mullins1957}. The nonlinear problem \eqref{mllnsoe}, \eqref{lic} also has this scaling property, \cite{Derkach2018, Mullins1957}, but our focus here is on similarity solutions for the linear problem.

Substituting \eqref{lt4} into \eqref{m22} and making the change of variable  $u=\frac{x}{(Bt)^{1/4}}$, yields that
\begin{equation}
\label{m34}Z^{(4)}(u)-\frac{1}{4}uZ'(u)+\frac{1}{4}Z(u)=0,\quad u\in\mathbb{R}.
\end{equation}
Having obtained \eqref{m34}, Mullins \cite{Mullins1957} went on to look for a power series solution assuming zero flux at the groove root and far field decay and calculated its coefficients. Here we consider \eqref{m34} without imposing further restrictions on the solutions, $Z$, of \eqref{m34}, such as no flux or decay, and this allows us to gain a more complete understanding of \eqref{m34} and its solutions.

\begin{theorem}
The fourth order linear ordinary differential equation \eqref{m34}
\begin{equation}
Z^{(4)}(u)-\frac{1}{4}uZ'(u)+\frac{1}{4}Z(u)=0,\quad u\in\mathbb{R},\nonumber
\end{equation}
has the following fundamental set of solutions
\begin{eqnarray}
\label{d9a}&&z_{1}(u)=\prescript{}{1}{F}_{3}^{}(-\frac{1}{4};\frac{1}{4},\frac{1}{2},\frac{3}{4};\frac{u^{4}}{256}),\\
\label{d9b}&&z_{2}(u)=u,\\
\label{d9c}&&z_{3}(u)=u^{2}\prescript{}{1}{F}_{3}^{}(\frac{1}{4};\frac{3}{4},\frac{5}{4},\frac{3}{2};\frac{u^{4}}{256}),\\
\label{d9d}&&z_{4}(u)=u^{3}\prescript{}{1}{F}_{3}^{}(\frac{1}{2};\frac{5}{4},\frac{3}{2},\frac{7}{4};\frac{u^{4}}{256}),
\end{eqnarray}
where $\prescript{}{p}{F}_{q}^{}(a_{1}\ldots,a_{p};b_{1},\ldots,b_{q};u)$ for $\{a_{i}\}_{i=1}^{p}$, $\{b_{i}\}_{i=1}^{q}\in\mathbb{R}$ denotes the generalized hypergeometric function (or the generalized hypergeometric series) defined as
\begin{multline}\label{def:hf}
\prescript{}{p}{F}_{q}^{}(a_{1}\ldots,a_{p};b_{1},\ldots,b_{q};\nu)=\sum_{k=0}^{\infty}\frac{(a_{1})_{k}\ldots(a_{p})_{k}}{(b_{1})_{k}\ldots(b_{q})_{k}}\frac{\nu^{k}}{k!}\\
=1+\frac{a_{1}\ldots a_{p}}{b_{1}\ldots b_{q}}\nu+\frac{a_{1}(a_{1}+1)\ldots a_{p}(a_{p}+1)}{b_{1}(b_{1}+1)\ldots b_{q}(b_{q}+1)2!}\nu^{2}+\ldots,
\end{multline}
in which
\begin{equation}
(\lambda)_{k}=\frac{\Gamma(\lambda+k)}{\Gamma(\lambda)}=\lambda(\lambda+1)\ldots(\lambda+k-1)\nonumber
\end{equation}
is the Pochhammer symbol.
\end{theorem}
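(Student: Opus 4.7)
The plan is to recognize \eqref{m34} as a generalized hypergeometric differential equation of type ${}_1F_3$ after a change of variable, and then to read off the four Frobenius solutions at $u=0$ from the indicial roots; linear independence will follow automatically from the leading-order behaviour near $u=0$.

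First I would multiply \eqref{m34} by $u^4$ and use the Euler-operator identity $u^n\frac{d^n}{du^n}=\theta_u(\theta_u-1)\cdots(\theta_u-n+1)$, with $\theta_u:=u\frac{d}{du}$, to rewrite the equation as
\begin{equation*}
\theta_u(\theta_u-1)(\theta_u-2)(\theta_u-3)Z \;-\; \tfrac{u^4}{4}(\theta_u-1)Z \;=\; 0.
\end{equation*}
Then, substituting $v=u^4/256$ and using $\theta_u=4\theta_v$ together with $u^4=256v$, an overall factor of $256$ cancels and the equation reduces to
\begin{equation*}
\theta_v\bigl(\theta_v-\tfrac{1}{4}\bigr)\bigl(\theta_v-\tfrac{1}{2}\bigr)\bigl(\theta_v-\tfrac{3}{4}\bigr)Z \;=\; v\bigl(\theta_v-\tfrac{1}{4}\bigr)Z,
\end{equation*}
which is the standard GHDE whose Frobenius solution with indicial exponent $0$ at $v=0$ is precisely ${}_1F_3(-\tfrac{1}{4};\tfrac{1}{4},\tfrac{1}{2},\tfrac{3}{4};v)$, i.e., $z_1(u)$.

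The remaining three solutions arise from the other indicial exponents at $v=0$, namely $1-b_j$ for $b_j\in\{\tfrac{1}{4},\tfrac{1}{2},\tfrac{3}{4}\}$, giving the roots $\{\tfrac{1}{4},\tfrac{1}{2},\tfrac{3}{4}\}$. Setting $Z=v^{1-b_j}W$ and applying the classical parameter-shift rule for GHDEs ($a_1\mapsto a_1+1-b_j$, $b_k\mapsto b_k+1-b_j$ for $k\neq j$, and $b_j\mapsto 2-b_j$) produces another ${}_1F_3$ series. Converting back via $v^{1/4}=u/4$, $v^{1/2}=u^2/16$, $v^{3/4}=u^3/64$ and absorbing scalar prefactors yields $z_2,z_3,z_4$; in particular, for the root $1/4$ the shifted upper parameter becomes $0$, so by $(0)_k=0$ for $k\geq 1$ the series collapses to $1$ and the solution reduces to $z_2(u)=u$. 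The leading expansions $z_i(u)\sim u^{i-1}/(i-1)!$ near $u=0$ then confirm \eqref{zij}, making the Wronskian at the origin a nonzero diagonal determinant and hence certifying $\{z_1,z_2,z_3,z_4\}$ as a fundamental set.

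The main obstacle is the bookkeeping for the change of variables and parameter shifts: carefully tracking the factor $256=4^4$ through the operator identities, permuting the three lower parameters $b_k$ after each shift, and matching the resulting ${}_1F_3$ series with those listed in \eqref{d9a}--\eqref{d9d}. A minor but genuine point is that the four indicial roots $\{0,\tfrac{1}{4},\tfrac{1}{2},\tfrac{3}{4}\}$ are pairwise non-congruent modulo $\mathbb{Z}$, which ensures that no logarithmic terms arise and that each Frobenius series extends to an entire function on $\mathbb{R}$; this justifies treating $\{z_i\}_{i=1}^{4}$ as a fundamental set on all of $\mathbb{R}$ rather than only near the origin.
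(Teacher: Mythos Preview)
Your proposal is correct and follows essentially the same route as the paper: both reduce \eqref{m34} to the generalized hypergeometric equation via the substitution $v=u^4/256$, identify the parameters $a_1=-\tfrac14$, $b_1=\tfrac14$, $b_2=\tfrac12$, $b_3=\tfrac34$, and then read off the four Frobenius solutions using the standard parameter-shift formula for GHDEs, noting that the indicial exponents $0,\tfrac14,\tfrac12,\tfrac34$ are pairwise non-integer-congruent so that no logarithmic solutions appear. Your Euler-operator computation is a tidier way to execute the change of variables than the paper's direct calculation in Appendix~A; one small slip to fix is that the leading behaviour is $z_i(u)\sim u^{i-1}$ (not $u^{i-1}/(i-1)!$), which is exactly what \eqref{zij} records.
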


\begin{proof}
Employing the change of variable
\begin{equation}\label{d33}
u(v)=4v^{1/4}
\end{equation}
in equation \eqref{m34}, yields the equation
\begin{equation}\label{d43}
[v\frac{d}{dv}(v\frac{d}{dv}+b_{1}-1)(v\frac{d}{dv}+b_{2}-1)(v\frac{d}{dv}+b_{3}-1)-v(v\frac{d}{dv}+a_{1})]V=0,
\end{equation}
for $V(v)=Z(u)\big|_{u=u(v)}$, where
\begin{equation}\label{d21}
a_{1}=-\frac{1}{4},\quad b_{1}=\frac{1}{4},\quad b_{2}=\frac{1}{2},\quad b_{3}=\frac{3}{4}.
\end{equation}
Equation \eqref{d43} constitutes a generalized hypergeometric equation.

Generalized hypergeometric differential equations (\textbf{GHDE})

\begin{equation}\label{d36}
v\frac{d}{dv}\left(\prod_{i=1}^{q}(v\frac{d}{dv}+b_{i}-1)\right)V-v\left(\prod_{j=1}^{p} (v\frac{d}{dv}+a_{j})\right)V=0,
\end{equation}
for $V=V(v)$, where  $p,\,q \in \mathbb{Z}_+$, $p, \, q>2,$  and $\{a_{j}\}_{j=1}^{j=p}, \; \{b_{i}\}_{i=1}^{i=q}, v  \in \mathbb{C}$, were first studied by Thomae \cite{Thomae1870}. In particular, Thomae showed that there
exists a solution to equation \eqref{d36}, which he denoted as 
\begin{equation}
\prescript{}{p}{F}_{q}^{}(a_{1},\ldots,a_{p};b_{1},\ldots,b_{q};v).\nonumber
\end{equation}

Accordingly, it follows from \eqref{d43}-\eqref{d21} that
\begin{equation}\label{d34}
\prescript{}{1}{F}_{3}^{}(-\frac{1}{4};\frac{1}{4},\frac{1}{2},\frac{3}{4};\frac{u^{4}}{256})
\end{equation}
is a solution to \eqref{m34}.

From the theory of generalized hypergeometric equations, see  e.g. \cite[Chapter 16]{NIST}, since $p<q$  in (\ref{d21}) and since  none of the differences between the numbers $0, b_{1}, b_{2}, b_{3}$ is an integer, it follows that 
\begin{equation}
V_0(v)=\prescript{}{1}{F}_{3}^{}(a_{1};b_{1},b_{2},b_{3};v)\nonumber
\end{equation}
together with
\begin{equation}\label{d35}
\quad V_j(v)= v^{1-b_{j}}\prescript{}{1}{F}_{3}^{}(1+a_{1}-b_{j}; 1+b_{1}-b_{j},\ldots,\ast,\ldots,1+b_{q}-b_{j};v),
\end{equation}
where $j=1,2,3,$ and where $\ast$ indicates that the $j$th entry is replaced by $2-b_{j},$ form a fundamental set of linearly independent solutions to \eqref{d43}, \eqref{d21}, for $v\in\mathbb{C}$.

 Recalling the change of variables (\ref{d33}), it now follows that
\begin{eqnarray}
&&\label{d37}z_{1}(u)=\prescript{}{1}{F}_{3}^{}(-\frac{1}{4};\frac{1}{4},\frac{1}{2},\frac{3}{4};\frac{u^{4}}{256})=\sum_{k=0}^{\infty}\frac{\left(-\frac{1}{4}\right)_{k}}{\left(\frac{1}{4}\right)_{k}\left(\frac{1}{2}\right)_{k}\left(\frac{3}{4}\right)_{k}256^{k}k!}u^{4k},\\
&&\label{d38}z_{2}(u)=u\prescript{}{1}{F}_{3}^{}(0;\frac{1}{2},\frac{3}{4},\frac{5}{4};\frac{u^{4}}{256})= u,\\
&&\label{d39}z_{3}(u)=u^{2}\prescript{}{1}{F}_{3}^{}(\frac{1}{4};\frac{3}{4},\frac{5}{4},\frac{3}{2};\frac{u^{4}}{256})=\sum_{k=0}^{\infty}\frac{\left(\frac{1}{4}\right)_{k}}{\left(\frac{3}{4}\right)_{k}\left(\frac{5}{4}\right)_{k}\left(\frac{3}{2}\right)_{k}256^{k}k!}u^{4k+2},\\
&&\label{d40}z_{4}(u)=u^{3}\prescript{}{1}{F}_{3}^{}(\frac{1}{2};\frac{5}{4},\frac{3}{2},\frac{7}{4};\frac{u^{4}}{256})=\sum_{k=0}^{\infty}\frac{\left(\frac{1}{2}\right)_{k}}{\left(\frac{5}{4}\right)_{k}\left(\frac{3}{2}\right)_{k}\left(\frac{7}{4}\right)_{k}256^{k}k!}u^{4k+3},
\end{eqnarray}
form a fundamental set of solutions to \eqref{m34} for $u \in \mathbb{R}$.
\end{proof}
\hfill

The elements of the fundamental set of solutions $\{z_{i}(u)\}_{i=1}^{4}$ of \eqref{m34} are portrayed in Fig.\ref{fig:fss}. They  converge for all finite values of $u\in\mathbb{R}$ and define entire functions, \cite{NIST}. Moreover they exhibit the following asymptotic behavior

\begin{figure}[h]
\centering
\begin{center}
  \includegraphics[width=1.0\linewidth]{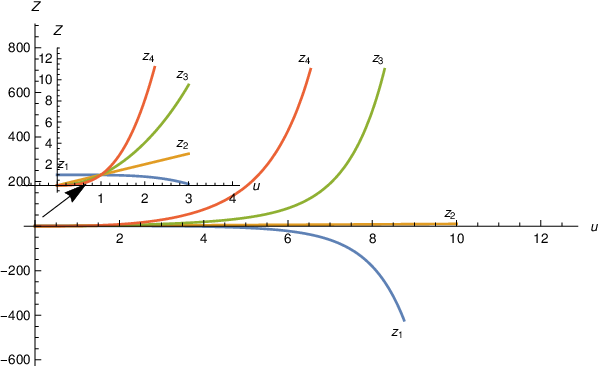}
\end{center}
\caption{\footnotesize{The elements of the fundamental set of solutions.}}
\label{fig:fss}
\end{figure}
\begin{flushleft}
\begin{eqnarray}
&&\lim_{u\rightarrow\infty}z_{1}(u)=-\infty,\quad\quad \lim_{u\rightarrow\infty}z_{i}(u)=\infty,\quad i=2,3,4,\nonumber\\
&&\lim_{u\rightarrow-\infty}z_{i}(u)=-\infty,\quad i=1,2,4,\quad\quad \lim_{u\rightarrow-\infty}z_{3}(u)=\infty.\nonumber
\end{eqnarray}
\end{flushleft}

From the definitions \eqref{d37}-\eqref{d40}, it is easy to verify that $z_{i}^{(j-1)}(0)$, $i, j= 1,2,3,4,$ satisfies \eqref{zij}. Returning to \eqref{lt4}, \eqref{m34}, we obtain
\begin{theorem}
If $y(t,x)$ is a self-similar solution to \eqref{m22} for $x>0$ (or $x<0$), $t>0$, which is of the form \eqref{lt4}, then $y(t,x)$ may be expressed as
\begin{equation}\label{d10}
y(t,x)=(Bt)^{1/4}\sum_{i=1}^{4}C_{i}z_{i}\left(\frac{x}{(Bt)^{1/4}}\right),\quad x>0,\ (\mbox{or }x<0),\ t>0,
\end{equation}
where $C_{i}$, $i=1,\ldots, 4$ are arbitrary constants, and the functions $\{z_{i}\}_{i=1}^{4}$ are prescribed in \eqref{d37}-\eqref{d40}.
\end{theorem}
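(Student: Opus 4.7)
The plan is to reduce the claim to the previous theorem by applying the standard similarity-variable substitution and then invoking the completeness of $\{z_i\}_{i=1}^{4}$ as a fundamental set of solutions to \eqref{m34}.

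First, I would take an arbitrary self-similar solution $y(t,x)$ of the form \eqref{lt4} on one of the half-lines $x>0$ or $x<0$, with $t>0$, and set $u=x/(Bt)^{1/4}$. Straightforward computation gives
\begin{equation}
y_{t}=\tfrac{B}{4}(Bt)^{-3/4}\bigl[Z(u)-uZ'(u)\bigr],\qquad y_{xxxx}=(Bt)^{-3/4}Z^{(4)}(u),\nonumber
\end{equation}
so that \eqref{m22} becomes
\begin{equation}
B(Bt)^{-3/4}\Bigl[Z^{(4)}(u)-\tfrac{1}{4}uZ'(u)+\tfrac{1}{4}Z(u)\Bigr]=0,\nonumber
\end{equation}
which, since $B(Bt)^{-3/4}\neq 0$, is equivalent to the ODE \eqref{m34}. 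Note that the range of $u$ traced out is $(0,\infty)$ when $x>0$, and $(-\infty,0)$ when $x<0$.

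Next, I would invoke the previous theorem, which establishes that $\{z_{1},z_{2},z_{3},z_{4}\}$ is a fundamental set of solutions to \eqref{m34} on $\mathbb{R}$. Because \eqref{m34} is a fourth-order linear ODE with analytic coefficients on all of $\mathbb{R}$, its solution space on any open interval is four-dimensional, and the restrictions of $z_1,z_2,z_3,z_4$ to either half-line $(0,\infty)$ or $(-\infty,0)$ remain linearly independent (their linear independence follows at once from the initial-value identities \eqref{zij} combined with analyticity, so that any non-trivial linear relation on a half-line would extend to all of $\mathbb{R}$). Consequently $Z(u)$ can be written as $\sum_{i=1}^{4}C_{i}z_{i}(u)$ with uniquely determined constants $C_{i}\in\mathbb{R}$.

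Substituting this expansion back into \eqref{lt4} immediately yields \eqref{d10}, completing the argument. There is no real obstacle here: the content of the theorem is essentially bookkeeping for the similarity reduction, with the substantive work already absorbed into the previous theorem. The only point requiring a brief justification is the observation that a four-dimensional basis on $\mathbb{R}$ restricts to a four-dimensional basis on each open half-line, which is handled by the analyticity argument above.
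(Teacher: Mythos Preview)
Your proposal is correct and follows essentially the same approach as the paper: the paper derives \eqref{m34} from \eqref{m22} via the similarity substitution earlier in the section, and then states Theorem~2 as an immediate consequence of Theorem~1 with the remark ``Returning to \eqref{lt4}, \eqref{m34}, we obtain\ldots''. You have simply written out this reduction in more detail, including a justification (which the paper omits) that the basis $\{z_i\}$ restricts to a basis on each half-line.
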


The following theorem allows us to distinguish between decaying and growing solutions to \eqref{m22} of the form \eqref{lt4}.

\begin{theorem}
The functions $\{y_{i}(t,x)\}_{i=1}^{4}$ defined by
\begin{equation}\label{d20}
\begin{bmatrix}
y_{1}(t,x)\\[0.3em]
y_{2}(t,x)\\[0.3em]
y_{3}(t,x)\\[0.3em]
y_{4}(t,x)
\end{bmatrix}
=(Bt)^{1/4}\begin{bmatrix}
 0    & \frac{1}{\sqrt{2}}  & -1  &\frac{1}{\sqrt{2}} \\[0.5em]
 1    & -\frac{1}{\sqrt{2}} &  0  &\frac{1}{\sqrt{2}} \\[0.5em]
 0    & \frac{1}{\sqrt{2}}  &  1  &\frac{1}{\sqrt{2}} \\[0.5em]
 1    &\frac{1}{\sqrt{2}}   &  0  &-\frac{1}{\sqrt{2}}
\end{bmatrix}
\begin{bmatrix}
 \frac{1}{\Gamma\left(\frac{5}{4}\right)}z_{1}(u)\\[0.3em]
 \frac{1}{\Gamma(1)} z_{2}(u)\\[0.3em]
 \frac{1}{2\Gamma\left(\frac{3}{4}\right)}z_{3}(u)\\[0.3em]
 \frac{1}{6\Gamma\left(\frac{1}{2}\right)}z_{4}(u)
\end{bmatrix},
\end{equation}
\noindent where $u=\frac{x}{(Bt)^{1/4}}$, form a fundamental set of self-similar solutions to \eqref{m22}, namely
\begin{equation}
y_{t}+B y_{xxxx}=0,\quad x\in\mathbb{R}\setminus\{0\}, \quad t>0,\nonumber
\end{equation}
satisfying \eqref{lt4}, which are linearly independent over the domain of definition. Moreover for $t>0$,
\begin{eqnarray}
&&\label{d24a}
\lim_{x\rightarrow\infty}y_{1}(t,x)=\lim_{x\rightarrow\infty}y_{2}(t,x)=0,\\
&&\label{d24b}
\lim_{x\rightarrow-\infty}y_{3}(t,x)=\lim_{x\rightarrow-\infty}y_{4}(t,x)=0,\\
&&\label{d25a}
\lim_{x\rightarrow-\infty} y_{1}(t, x)=\lim_{x\rightarrow-\infty} y_{2}(t, x)=-\infty,\\
&&\label{d25b}
\lim_{x\rightarrow\infty} y_{3}(t,x)=\infty,\quad \lim_{x\rightarrow\infty}y_{4}(t,x)=-\infty,
\end{eqnarray}
and furthermore
\begin{eqnarray}
\label{d26a}&&\lim_{t\rightarrow0}y_{1}(t,x)=\lim_{t\rightarrow0}y_{2}(t,x)=0,\quad x>0,\\
\label{d26b}&&\lim_{t\rightarrow0}y_{3}(t,x)=\lim_{t\rightarrow0}y_{4}(t,x)=0, \quad x<0.
\end{eqnarray}
\end{theorem}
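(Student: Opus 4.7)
The plan is to establish the theorem in three stages: verify that each $y_i$ solves \eqref{m22}, show that the four $y_i$ are linearly independent (and hence form a fundamental set among self-similar solutions of the form \eqref{lt4}), and extract the asymptotic and initial behavior \eqref{d24a}--\eqref{d26b} from the large-$|u|$ analysis of the $z_i$.

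First I would observe that each $y_i$ is, by construction in \eqref{d20}, a linear combination of the functions $(Bt)^{1/4} z_j(x/(Bt)^{1/4})$; by the preceding theorem and linearity of \eqref{m22}, each $y_i$ is therefore a self-similar solution of the required form. For linear independence I would invoke \eqref{zij}: direct evaluation of $\lim_{x\to 0^\pm}\partial^{j-1}y_i/\partial x^{j-1}$ expresses the $4\times 4$ matrix of boundary data as a product $M\cdot D$, where $M$ is the numerical matrix appearing in \eqref{d20} and $D=\operatorname{diag}\bigl(1/\Gamma(5/4),\,1,\,1/(2\Gamma(3/4)),\,1/(6\Gamma(1/2))\bigr)$. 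Elementary row operations on $M$ (for instance forming $R_1\pm R_3$ and $R_2\pm R_4$) reduce $M$ to a manifestly nonsingular matrix, so $MD$ is invertible and the $y_i$ are independent; combined with the preceding theorem, which establishes that the space of self-similar solutions of the form \eqref{lt4} is four-dimensional, this yields the fundamental set property.

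The asymptotic assertions \eqref{d24a}--\eqref{d25b} form the principal technical content. The crux is the large-$|u|$ behavior of the generalized hypergeometric components of $\{z_i(u)\}$. Standard asymptotic theory for $\prescript{}{p}{F}_{q}^{}$ with $p<q$ (see \cite{NIST}, or equivalently the Meijer $G$-function connection formulas) represents each $z_i(u)$ for large $|u|$ as a superposition of four exponential contributions associated with the four roots of $\zeta^4=1$, together with an algebraically decaying tail. The entries $\pm 1/\sqrt{2}$, $\pm 1$, $0$ appearing in $M$ are precisely the connection weights that annihilate the dominant $e^{|u|}$-type contribution as $u\to+\infty$ in the combinations defining $y_1,y_2$, leaving only subdominant decay and hence \eqref{d24a}. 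The reflection symmetries $y_3(t,x)=-y_1(t,-x)$ and $y_4(t,x)=y_2(t,-x)$, immediate from the parities of the $z_i$, then yield \eqref{d24b}. Conversely, on the opposite half-lines the exponentially growing terms do not cancel, and reading off their signs from the Stokes expansion of the combinations in $M$ establishes the divergences \eqref{d25a} and \eqref{d25b}.

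The initial planarity assertions \eqref{d26a}--\eqref{d26b} then follow almost for free: for fixed $x>0$ as $t\to 0^+$ one has $u=x/(Bt)^{1/4}\to+\infty$ while $(Bt)^{1/4}\to 0$, so writing $y_i(t,x)=(Bt)^{1/4}F_i(u)$ and using $F_i(u)\to 0$ at infinity (obtained in the derivation of \eqref{d24a}) gives $y_i(t,x)\to 0$ at once; the case $x<0$ for $y_3,y_4$ is handled via the same reflection symmetries. The main obstacle in this program lies entirely in the asymptotic step: determining the connection coefficients of $\prescript{}{1}{F}_{3}^{}$ to sufficient precision to verify the exponential cancellations, and controlling the remaining algebraic tails well enough to conclude genuine decay to $0$ rather than mere boundedness. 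I would expect this analysis to constitute the substance of Appendix~B.
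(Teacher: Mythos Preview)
Your proposal is correct in outline but follows a genuinely different route from the paper, particularly for the asymptotic analysis, and the paper's choices are worth knowing.

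For the construction of the $y_i$ and linear independence, the paper does not simply take the matrix in \eqref{d20} as given and verify it is nonsingular. Instead it \emph{derives} the matrix by taking the Laplace transform of \eqref{m22} in $t$, which yields the constant-coefficient ODE $p\overline{y}+B\overline{y}_{xxxx}=0$; the four explicit exponential-trigonometric solutions $\overline{y}_j(p,x)$ of this ODE are written down, their derivatives at $x=0$ computed, and matching with the self-similar boundary data \eqref{d44} produces exactly the matrix $D$. This explains \emph{why} the particular numerical entries arise (they are the real and imaginary parts of the fourth roots of $-1$), whereas your argument treats them as data to be verified after the fact. Your approach is shorter since the theorem statement already hands you the matrix, but the paper's derivation is more informative.

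For the asymptotics, the paper does \emph{not} use the $\prescript{}{p}{F}_{q}^{}$ connection-coefficient or Stokes machinery you propose. The growth statements \eqref{d25a}--\eqref{d25b} are obtained by an elementary sign analysis of the power-series coefficients: for $u>0$ the series for $y_3$ consists entirely of positive terms, and the series for $y_4$ consists (after the first two) entirely of negative terms. The decay statements \eqref{d24a}--\eqref{d24b} are obtained by an entirely separate device: the paper constructs, via the Fourier cosine transform, two auxiliary solutions $\tilde{y}_1,\tilde{y}_2$ with explicit integral representations, proves these decay by repeated integration by parts (Riemann--Lebesgue type arguments), and then identifies $y_1,y_2$ as explicit linear combinations of $\tilde{y}_1,\tilde{y}_2$ by matching four boundary values at $x=0$. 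Your uniform approach through the large-argument asymptotics of $\prescript{}{1}{F}_{3}^{}$ would also work and is arguably more systematic, but it imports nontrivial special-function results (the connection formulas and control of the algebraic remainder, which you correctly flag as the delicate point); the paper's argument is longer but entirely self-contained, requiring only calculus. Your derivation of \eqref{d26a}--\eqref{d26b} from the spatial asymptotics via $u\to\infty$ is the same idea the paper uses.
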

\begin{proof}
Let $y(t,x)$ be a self-similar solution of the form \eqref{lt4} to \eqref{m22}. Then formally taking the Laplace transform of \eqref{m22} with respect to the time variable $t$, we get

\begin{equation}
\label{m26}p\overline{y}+B\overline{y}_{xxxx}=0,\quad x\in\mathbb{R}\setminus\{0\},\quad p>0,
\end{equation}
\noindent where
\begin{equation}
\overline{y}(p,x)=\int_{0}^{\infty}e^{-pt}y(t,x)dt.\nonumber
\end{equation}

From the assumed self-similarity of $y(t,x)$, it follows from \eqref{d10} and \eqref{zij} that
\begin{equation}
\label{d44}\frac{\partial^{(i-1)}y}{\partial {x}^{(i-1)}}(t,0)=(Bt)^{(2-i)/4}C_{i}(i-1)!,\quad i=1,2,3,4.
\end{equation}
By taking the Laplace transform of \eqref{d44}, we get
\begin{equation}\label{lic1}
\frac{\partial^{(i-1)}\overline{y}}{\partial x^{(i-1)}}(p,0)=C_{i}(i-1)!B^{(2-i)/4}\ \Gamma\left(\frac{6-i}{4}\right)p^{(i-6)/4},\quad i=1,2,3,4.
\end{equation}
Let us now note that the ODE given in \eqref{m26} has a set of four fundamental solutions, $\{\overline{y}_{j}(p,x)\}_{j=1}^{4}$,

\begin{eqnarray}\label{d28}
&&\overline{y}_{1}(p,x)=B^{1/4}p^{-5/4}\exp\left(-\frac{p^{1/4}}{B^{1/4}\sqrt{2}}x\right)\sin\left(\frac{p^{1/4}}{B^{1/4}\sqrt{2}}x\right),\\
&&\label{d29}\overline{y}_{2}(p,x)=B^{1/4}p^{-5/4}\exp\left(-\frac{p^{1/4}}{B^{1/4}\sqrt{2}}x\right)\cos\left(\frac{p^{1/4}}{B^{1/4}\sqrt{2}}x\right),\\
&&\label{d30}\overline{y}_{3}(p,x)=B^{1/4}p^{-5/4}\exp\left(\frac{p^{1/4}}{B^{1/4}\sqrt{2}}x\right)\sin\left(\frac{p^{1/4}}{B^{1/4}\sqrt{2}}x\right),\\
&&\label{d31}\overline{y}_{4}(p,x)=B^{1/4}p^{-5/4}\exp\left(\frac{p^{1/4}}{B^{1/4}\sqrt{2}}x\right)\cos\left(\frac{p^{1/4}}{B^{1/4}\sqrt{2}}x\right).
\end{eqnarray}
It follows from \eqref{d28}-\eqref{d31} that

\begin{equation}
\frac{\partial^{(i-1)}\overline{y}_{j}(p,0)}{\partial x^{(i-1)}}=d^{\ast}_{ij}B^{(2-i)/4}p^{(i-6)/4},\quad i,j\in\{1,2,3,4\},\nonumber
\end{equation}
where $d^{\ast}_{ij}=[D^{T}]_{ij}=[D]_{ji}$, 

\begin{equation}
D:=\begin{bmatrix}
 0    & \frac{1}{\sqrt{2}}  & -1  &\frac{1}{\sqrt{2}} \\[0.5em]
 1    & -\frac{1}{\sqrt{2}} &  0  &\frac{1}{\sqrt{2}} \\[0.5em]
 0    & \frac{1}{\sqrt{2}}  &  1  &\frac{1}{\sqrt{2}} \\[0.5em]
 1    &\frac{1}{\sqrt{2}}   &  0  &-\frac{1}{\sqrt{2}}
\end{bmatrix}.\nonumber
\end{equation} 
These solutions may be linearly combined to yield

\begin{equation}\label{lt}
\overline{y}(p,x)=\sum_{i=1}^{4}c_{i}\overline{y}_{i}(p,x),
\end{equation}
which satisfies both \eqref{m26} and \eqref{lic1} if we set
\begin{equation}\label{d32}
\begin{bmatrix}
c_{1}\\[0.3em]
c_{2}\\[0.3em]
c_{3}\\[0.3em]
c_{4}
\end{bmatrix}
=\frac{1}{2}D
\begin{bmatrix}
0!\Gamma\left(\frac{5}{4}\right) C_{1}\\[0.3em]
1!\Gamma(1)C_{2}\\[0.3em]
2!\Gamma\left(\frac{3}{4}\right)C_{3}\\[0.3em]
3!\Gamma\left(\frac{1}{2}\right)C_{4}
\end{bmatrix}.
\end{equation}

Recalling that $\overline{y}(p,x)$ is Laplace transform of $y(t,x)$, we denote by $y_{i}(t,x)$ the inverse Laplace transform of $\overline{y}_{i}(p,x)$, for $i=1,2,3,4,$ and from \eqref{lt} we obtain that
\begin{equation}\label{nh0}
y(t,x)=\sum_{i=1}^{4}c_{i}y_{i}(t,x).
\end{equation}
Since $y(t,x)$ is a self-similar solution to \eqref{m22}, by \eqref{d10} it may be expressed equivalently as

\begin{equation}\label{d27}
y(t,x)=(Bt)^{1/4}\sum_{i=1}^{4}C_{i}z_{i}\left(\frac{x}{(Bt)^{1/4}}\right),\ x>0\ (\mbox{or }x<0),\ t>0,
\end{equation}
where the coefficients in \eqref{d27} can easily be obtained from \eqref{d32} upon noting that $\frac{1}{2}DD^{T}=I$.

 The proofs of the asymptotic properties \eqref{d24a}-\eqref{d25b} and \eqref{d26a}-\eqref{d26b} are rather technical and are given in Appendix \ref{de}.
\end{proof}

\section{Data Fitting}\label{sec:df}

A major advantage of our solution representations over previous solutions such as the solution given by Mullins \cite{Mullins1957} and the solution given in Amram et al.~\cite{Amram2014} is that it can be used effectively to do data fitting, enabling the identification of the effective boundary conditions and relevant physical parameters during thermal grooving. The solutions given  in \cite{Mullins1957} and \cite{Amram2014} were prescribed via power series with recursively defined coefficients. By relying on the power series representation, these solutions can be plotted as truncated series (or polynomials) with unbounded growth as $x\rightarrow\pm\infty$; accordingly, the resultant plots of these solutions are primarily helpful for analyzing to the surface profiles in close proximity to the thermal groove, as in Fig.~8a \cite{Amram2014}. The solutions \eqref{d41} and \eqref{nh0} which were derived here are more general and in particular, \eqref{d41} is prescribed in terms of known functions which can be readily and accurately evaluated using common software in an arbitrarily wide neighborhood of the thermal groove.

Below, we illustrate data fitting of our solution to experimental data by Amram et al.~\cite{Amram2014}, from atomic microscopy measurements of thermal groove formation in a nickel (Ni) film, Fig.~\ref{fig:film20}, and in bulk Ni, Fig.~\ref{fig:bulk20}, after annealing of the specimens at $700^{\circ}$C for 20 minutes.
\begin{figure}[h]
\centering
  \includegraphics[width=0.66\textwidth]{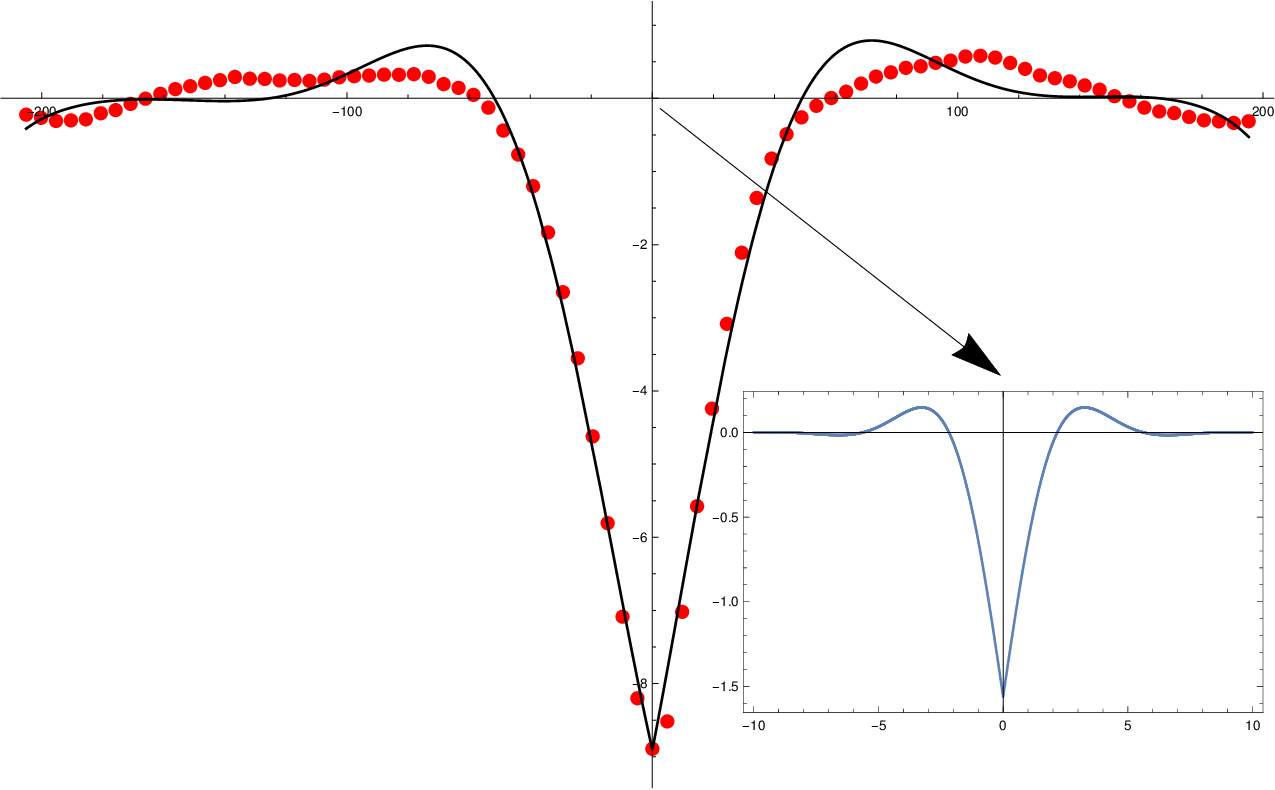}
\caption{\scriptsize{Experimental data from Amram et al., \cite{Amram2014}, of a groove profile in a thin Ni film annealed at $700^{\circ}$C for 20 min (red dots) plotted together with our solution (black line). The embedded figure portrays the calculated groove profile based on the solution (as a function of $u=\frac{x}{(Bt)^{1/4}}$) given in \cite{Amram2014}$^{\ref{acta}}$.}}
\label{fig:film20}
\end{figure}
\footnote{\label{acta}Reprinted from Acta Materialia, Vol. 69, D.~Amram, L.~Klinger, N.~Gazit, H.~Gluska, E.~Rabkin, Grain boundary grooving in thin films revisited: The role of interface diffusion, pp.~386-396, Copyright (2014), with permission from Elsevier.}

\begin{figure}[h]
\centering
  \includegraphics[width=0.9\textwidth]{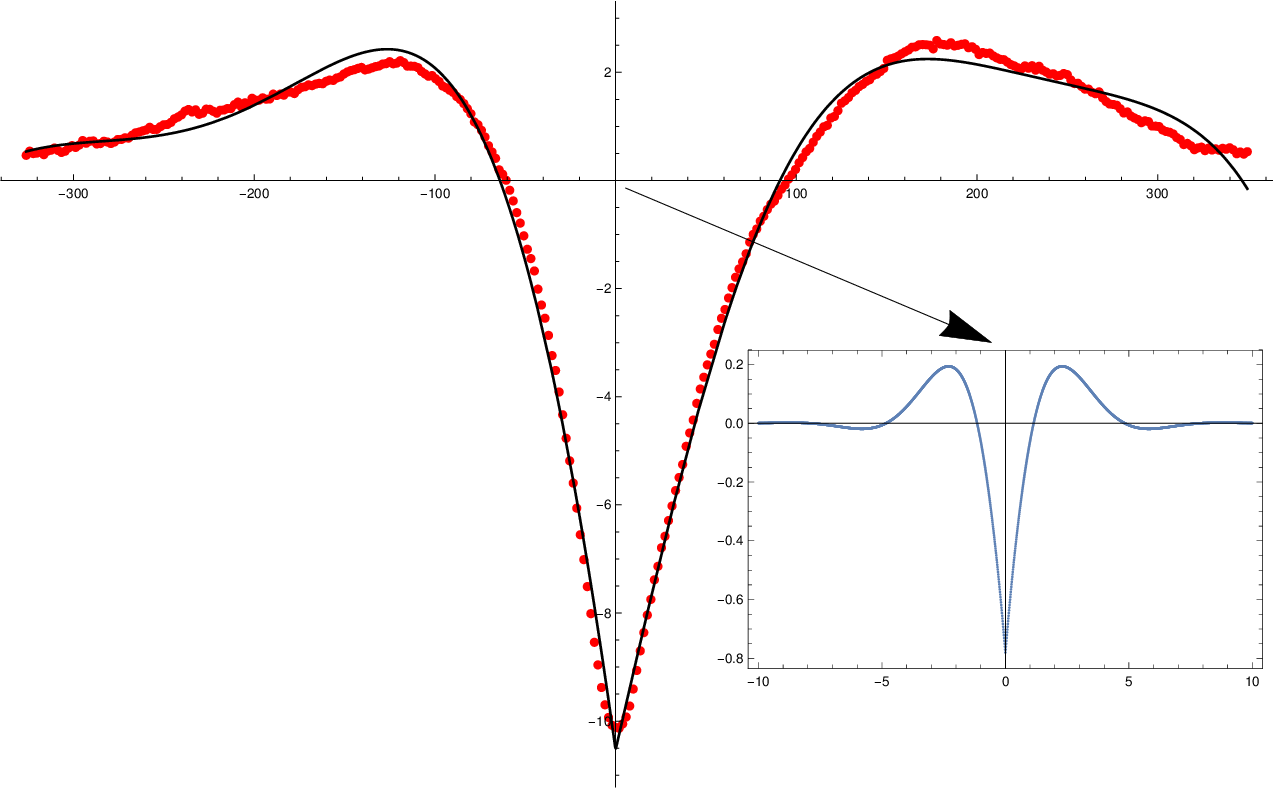}
\caption{\scriptsize{Experimental data from Amram et al., \cite{Amram2014}, of a groove profile in Ni bulk annealed at $700^{\circ}$C for 20 min (red dots) plotted together with our solution (black line). The embedded figure portrays the calculated groove profile based on Mullins' \cite{Mullins1957} solution (as a function of $u=\frac{x}{(Bt)^{1/4}}$)$^{\ref{acta}}$.}}
\label{fig:bulk20}
\end{figure}
\appendix
\setcounter{theorem}{0}
\section{Derivation of a Set of Fundamental Solutions}\label{dgs}

\begin{lemma} The equation \eqref{m34}
\begin{equation}
Z^{(4)}(u)-\frac{1}{4}uZ'(u)+\frac{1}{4}Z(u)=0\quad u>0\quad (\mbox{or }u<0)\nonumber
\end{equation}
can be transformed into a GHDE for $V=V(v)$,
\begin{equation}\label{a1}
v \frac{d}{dv}\Bigl( \prod_{i=1}^{q} (v\frac{d}{dv} + b_i -1) \Bigr)V -v \Bigl( \prod_{j=1}^{p} (v\frac{d}{dv} + a_j) \Bigr)V=0,
\end{equation}
where  $p,\,q \in \mathbb{Z}_+$, $p, \, q>2,$  and $\{a_{j}\}_{j=1}^{j=p}, \; \{b_{i}\}_{i=1}^{i=q}, v  \in \mathbb{C}$, by setting
\begin{equation}\label{a3}
p=1,\quad q=3,\quad \hbox{\, and \,}\quad a_{1}=-\frac{1}{4}, b_{1}=\frac{1}{4}, b_{2}=\frac{1}{2}, b_{3}=\frac{3}{4},
\end{equation}
and making the change of variable
\begin{equation}\label{a2}
v(u)=\frac{u^{4}}{256}.
\end{equation}
\end{lemma}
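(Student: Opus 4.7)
The plan is to apply the change of variables \eqref{a2} directly to \eqref{m34} and re-express the resulting operator in terms of the Euler operator $\theta := v \tfrac{d}{dv}$ acting on $V(v) := Z(u(v))$. Since $v = u^4/256$ is equivalent to $u = 4v^{1/4}$, the chain rule yields the single driving identity
\begin{equation}
u\,\frac{d}{du} \;=\; 4v\,\frac{d}{dv} \;=\; 4\theta, \nonumber
\end{equation}
and every subsequent step is a direct consequence of this.

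First I would invoke the classical operator identity
\begin{equation}
u^{n}\frac{d^{n}}{du^{n}} \;=\; \left(u\frac{d}{du}\right)\!\left(u\frac{d}{du}-1\right)\cdots\left(u\frac{d}{du}-n+1\right), \nonumber
\end{equation}
which is proved by a short induction on $n$. Taking $n=4$ and substituting $u\tfrac{d}{du}=4\theta$ gives
\begin{equation}
u^{4}\frac{d^{4}}{du^{4}} \;=\; 4\theta(4\theta-1)(4\theta-2)(4\theta-3) \;=\; 256\,\theta\!\left(\theta-\tfrac{1}{4}\right)\!\left(\theta-\tfrac{1}{2}\right)\!\left(\theta-\tfrac{3}{4}\right). \nonumber
\end{equation}
Dividing by $u^{4}=256v$ yields $Z^{(4)}(u)=\tfrac{1}{v}\,\theta(\theta-\tfrac{1}{4})(\theta-\tfrac{1}{2})(\theta-\tfrac{3}{4})V$. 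For the lower-order part,
\begin{equation}
-\tfrac{1}{4}uZ'(u)+\tfrac{1}{4}Z(u) \;=\; -\tfrac{1}{4}\!\left(u\tfrac{d}{du}-1\right)V \;=\; -\tfrac{1}{4}(4\theta-1)V \;=\; -\!\left(\theta-\tfrac{1}{4}\right)V. \nonumber
\end{equation}

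Substituting both into \eqref{m34} and multiplying by $v$ produces
\begin{equation}
\theta\!\left(\theta-\tfrac{1}{4}\right)\!\left(\theta-\tfrac{1}{2}\right)\!\left(\theta-\tfrac{3}{4}\right)V - v\!\left(\theta-\tfrac{1}{4}\right)V \;=\; 0. \nonumber
\end{equation}
Since the factors $\theta + c$ commute as polynomials in $\theta$, I can freely reorder the bracketed product on the left and identify $\theta-\tfrac{3}{4}=\theta+b_{1}-1$, $\theta-\tfrac{1}{2}=\theta+b_{2}-1$, $\theta-\tfrac{1}{4}=\theta+b_{3}-1$ with $b_{1}=\tfrac{1}{4},\ b_{2}=\tfrac{1}{2},\ b_{3}=\tfrac{3}{4}$, and $\theta-\tfrac{1}{4}=\theta+a_{1}$ with $a_{1}=-\tfrac{1}{4}$. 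This is precisely \eqref{a1} with the parameter choice \eqref{a3}, completing the transformation.

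The only step that is not a one-line substitution is the Cauchy--Euler operator identity $u^{n}D^{n}=\prod_{k=0}^{n-1}(uD-k)$, which I would either prove in a sentence by induction or cite as standard. Everything else is routine algebra, and the hard part is essentially bookkeeping: keeping track of the correspondence between the shifts $\theta-k/4$ arising from the fourth-derivative expansion and the prescribed Pochhammer-style parameters $b_{i}-1$ in the GHDE \eqref{a1}.
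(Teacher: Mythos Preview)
Your proof is correct and considerably more efficient than the paper's, but it follows a genuinely different route. The paper does not use the Euler operator $\theta=v\tfrac{d}{dv}$ or the Cauchy--Euler identity $u^{n}D^{n}=\prod_{k=0}^{n-1}(uD-k)$ at all. Instead, it proceeds by \emph{discovery}: it writes the general GHDE in expanded polynomial-coefficient form for each of the cases $p\le q$, $p=q+1$, $p>q+1$, performs an unspecified change of variable $u=u(v)$ in \eqref{m34}, and then matches the resulting coefficients of $d^{j}V/dv^{j}$ against those expanded forms. The cases $p=q+1$ and $p>q+1$ are eliminated by contradiction, the change of variable $u=4v^{1/4}$ emerges from matching the leading coefficient, and the parameters $a_{1},b_{1},b_{2},b_{3}$ are then recovered by solving a small algebraic system (equations \eqref{a19}--\eqref{a21}).

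What each approach buys: the paper's method is a genuine derivation showing that the given change of variable and parameters are essentially forced, whereas your argument is a clean direct verification that is far shorter once the transformation \eqref{a2} and parameters \eqref{a3} are already stated in the lemma. Since the lemma as written hands you both, your verification is entirely adequate as a proof; the paper's longer argument additionally demonstrates uniqueness of the GHDE identification (up to permutation of the $b_{i}$), which your proof does not address but which the lemma does not claim either.
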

\begin{proof}
Observe that \eqref{a1} may be expanded and written as
\begin{equation}\label{a9}
v^{q}\frac{d^{q+1}V}{d v^{q+1}}+\sum_{j=1}^{q}v^{j-1}(\alpha_{j}v+\beta_{j})\frac{d^{j}V}{d v^{j}}+\alpha_{0}V=0\quad\mbox{ if }p\leq q,
\end{equation}
\noindent where $\alpha_{j}$, $\beta_{j}\in \mathbb{C}$, or as
\begin{equation}\label{a10}
v^{q}(1-v)\frac{d^{q+1}V}{d v^{q+1}}+\sum_{j=1}^{q}v^{j-1}(\tilde{\alpha}_{j}v+\tilde{\beta}_{j})\frac{d^{j}V}{d v^{j}}+\tilde{\alpha}_{0}V=0\quad\mbox{ if }p=q+1,
\end{equation}
\noindent where $\tilde{\alpha}_{j}$, $\tilde{\beta}_{j}\in\mathbb{C}$, or as
\begin{equation}\label{a10a}
v^{p}\frac{d^{p}V}{d v^{p}}+\sum_{j=1}^{p-1}v^{j-1}(\tilde{\tilde{\alpha}}_{j}v+\tilde{\tilde{\beta}}_{j})\frac{d^{j}V}{d v^{j}}+\tilde{\tilde{\alpha}}_{0}V=0\quad\mbox{ if }p> q+1,
\end{equation}
where $\tilde{\tilde{\alpha}}_{j}$, $\tilde{\tilde{\beta}}_{j}$ $\in \mathbb{C}$, \cite{NIST}. Note that if we set $q=3$ in (\ref{a9}), or (\ref{a10}), or we set $p=4$ in \eqref{a10a}, then the resultant equations are fourth order  linear homogeneous ODEs in which the coefficients of $\frac{d^{j}}{d v^{j}}V$ for $j=0,\ldots, 4,$ are polynomials in $v$ of degree $d_j$ with  $d_j \le j$. This allows us to postulate that via a suitable change of variables, $u=u(v)$, equation \eqref{m34} can be transformed into either \eqref{a9}, \eqref{a10} or \eqref{a10a} for some suitable choice of the parameters $p$, $q\in\mathbb{Z}_{+}$ and $\alpha_{j}$, $\beta_{j}$, $\tilde{\alpha}_{j}$, $\tilde{\beta}_{j}$, $\tilde{\tilde{\alpha}}_{j}$, $\tilde{\tilde{\beta}}_{j}\in\mathbb{R}$, since \eqref{m34} is an ordinary differential equation for $Z(u), u\in\mathbb{R}$ with real valued coefficients.

 Let us first consider the case   $q=3$ and $p=q+1=4$, which corresponds to (\ref{a10}), and let us attempt to find a change of variables which can transform \eqref{m34} into \eqref{a10}. Setting $u= u(v)$ in \eqref{m34} yields for $V(v)=Z(u)|_{u=u(v)}$ the equation
\begin{multline}\label{a11}
\frac{1}{(u')^{4}}\frac{d^{4}V}{dv^{4}}-6\frac{u''}{(u')^{5}}\frac{d^{3}V}{dv^{3}}+\left(15\frac{(u'')^{2}}{(u')^{6}}
-4\frac{u'''}{(u')^{5}}\right)\frac{d^{2}V}{dv^{2}}\\
+\left(10\frac{u''u'''}{(u')^{6}}-15\frac{(u'')^{3}}{(u')^{7}}-\frac{u^{(4)}}{(u')^{5}}-\frac{1}{4}\frac{u}{u'}\right)\frac{dV}{dv}+\frac{1}{4}V=0.
\end{multline}
\noindent Equating the coefficients of $\frac{d^{4}V}{dv^{4}}$ and $V$ in \eqref{a10} and \eqref{a11} implies that
\begin{equation}\label{a22}
(u')^{-4}=v^{3}-v^{4}.
\end{equation}
\noindent However by using \eqref{a22} to evaluate the coefficient of $\frac{d^{2}V}{dv^{2}}$ in \eqref{a11}, we find that
\begin{equation}
15\frac{(u'')^{2}}{(u')^{6}}-4\frac{u'''}{(u')^{5}}=-\frac{v(51-168v+112v^{2})}{16(v-1)},\nonumber
\end{equation}
\noindent which is not of the form $\tilde{\alpha}_{2}v^{2}+\tilde{\beta}_{2}v$ for $\tilde{\alpha}_{2}$, $\tilde{\beta}_{2}\in\mathbb{R}$. Hence we conclude that there does not exist a change of variables which transforms \eqref{m34} into \eqref{a10}.

 A similar argument allows us to conclude that also in the case $p > q + 1$, which corresponds to \eqref{a10a}, there is no change of variables which can transform \eqref{m34} into \eqref{a10a}. This can be seen as follows. In accordance with the form of \eqref{a10a} we equate the coefficients of $\frac{d^{4}V}{dv^{4}}$ and $V$ in \eqref{a10a} and \eqref{a11}, which implies
\begin{equation}\label{a10b}
(u')^{-4}=v^{4}.
\end{equation}
\noindent Using \eqref{a10b} to calculate the coefficient of $\frac{dV}{dv}$ in \eqref{a11}, we get
\begin{equation}
10\frac{u''u'''}{(u')^{6}}-15\frac{(u'')^{3}}{(u')^{7}}-\frac{u^{(4)}}{(u')^{5}}-\frac{1}{4}\frac{u}{u'}=v-\frac{v}{4}(\ln(v)+C), \quad C\in\mathbb{R},\nonumber
\end{equation}
\noindent which is not of the form $\tilde{\tilde{\alpha}}_{1}v+\tilde{\tilde{\beta}}_{1}$ for $\tilde{\tilde{\alpha}}_{1},\tilde{\tilde{\beta}}_{1}\in\mathbb{R}$. This implies that there is no change of variables that can transform \eqref{m34} into \eqref{a10a}.

 So let us now focus on the case $p\leq q =3$ which corresponds to (\ref{a9}), and let us look for a change of variable which can transform \eqref{m34} into \eqref{a9}. Equating the coefficients of $\frac{d^{4}}{dv^{4}}V$ in \eqref{a11} and in \eqref{a9}, we get that
$(u')^{-4}=v^{3},$
which implies that
\begin{equation} \label{cv0}
u(v)=4v^{1/4}+C,\quad C \in \mathbb{R}.
\end{equation}
Taking (\ref{cv0}) into account and matching the coefficients of $\frac{d^{j}}{dv^{j}}V,$ $j=1,2,3,$ in \eqref{a11} and \eqref{a9}, we get
\begin{eqnarray}
&&10\frac{u''u'''}{(u')^{6}}-15\frac{(u'')^{3}}{(u')^{7}}-\frac{u^{(4)}}{(u')^{5}}-\frac{1}{4}\frac{u}{u'}=\frac{3}{32}-\frac{C}{4}v^{3/4}-v=\alpha_{1}v+\beta_{1},\nonumber\\
&&15\frac{(u'')^{2}}{(u')^{6}}-4\frac{u'''}{(u')^{5}}=\frac{51}{16}v=\alpha_{2}v^{2}+\beta_{2}v,\quad -6\frac{u''}{(u')^{5}}=\frac{9}{2}v^{2}=\alpha_{3}v^{3}+\beta_{3}v^{2},\nonumber
\end{eqnarray}
which imply that
\begin{equation}
\alpha_{0}\!=\frac{1}{4},\quad \alpha_{1}\!=-1,\quad \alpha_{2}\!=\alpha_{3}\!=0,\quad \beta_{1}\!=\frac{3}{32},\quad \beta_{2}\!=\frac{51}{16},\quad\beta_{3}=\frac{9}{2},
\end{equation}
and that $C=0$ in (\ref{cv0}), which implies that
\begin{equation}\label{a12}
u(v)=4v^{1/4}.
\end{equation}

Next, we want to write \eqref{m34} in the form \eqref{a1}, since specific knowledge of the values of $a_{i}$ and $b_{i}$ in \eqref{a1} will provide us with a set of fundamental solutions to \eqref{m34}. Since \eqref{a1} is equivalent to \eqref{a9}, \cite[Section 16.8(ii)]{NIST}, in order to identify the coefficients in \eqref{a1}, we may proceed by using the inverse of the function $u=u(v)$ defined in \eqref{a12}, namely,
\begin{equation}\label{a13}
v(u)=\frac{1}{256}u^{4},
\end{equation}
\noindent as a change of variables in \eqref{a1}.

Recalling that $q=3$ and $p\leq q$, it follows that $p\in\{0, 1, 2, 3\}$. We demonstrate that $p=1$ by eliminating the other cases. The case $p=0$ is easily eliminated, since when $p=0$ equation \eqref{a1} yields
\begin{equation}\label{a13a}
\frac{d^{4}Z}{du^{4}}+B_{3}\frac{4}{u}\frac{d^{3}Z}{du^{3}}+B_{2}\frac{16}{u^{2}}\frac{d^{2}Z}{du^{2}}+B_{1}\frac{64}{u^{3}}\frac{dZ}{du}=0,
\end{equation}
which is not equivalent to \eqref{m34}, as the coefficients of $Z$ in \eqref{a13a} and \eqref{m34} do not match.

Next, let us suppose that $p=3$. Then \eqref{a1} yields

\begin{multline}\label{a14}
\frac{d^{4}Z}{du^{4}}+(B_{3}\frac{4}{u}+A_{3}\frac{u^{3}}{64})\frac{d^{3}Z}{du^{3}}+(B_{2}\frac{16}{u^{2}}+A_{2}\frac{u^{2}}{16})\frac{d^{2}Z}{du^{2}}\\
+(B_{1}\frac{64}{u^{3}}+A_{1}\frac{u}{4})\frac{dZ}{du}+A_{0}Z=0,
\end{multline}
\noindent where
\begin{eqnarray}
\label{a15}&&B_{1}=\frac{9}{16}(b_{1}+b_{2}+b_{3}-\frac{3}{4})-\frac{3}{4}(b_{1}b_{2}+b_{1}b_{3}+b_{2}b_{3})+b_{1}b_{2}b_{3},\\
\label{a16}&&B_{2}=b_{1}b_{2}+b_{1}b_{3}+b_{2}b_{3}-\frac{5}{4}(b_{1}+b_{2}+b_{3})+\frac{19}{16},\\
\label{a17}&&B_{3}=b_{1}+b_{2}+b_{3}-\frac{3}{2},\\
&&A_{0}=-a_{1}a_{2}a_{3},\nonumber\\
&&A_{1}=-\frac{1}{4}(a_{1}+a_{2}+a_{3}+\frac{1}{4})-(a_{1}a_{2}+a_{1}a_{3}+a_{2}a_{3}),\nonumber\\
&&A_{2}=-(a_{1}+a_{2}+a_{3}+\frac{3}{4}),\nonumber\\
\label{a18}&&A_{3}=-1.
\end{eqnarray}

\noindent However, comparing the coefficients of $\frac{d^{3}Z}{du^{3}}$ in equations \eqref{a14} and \eqref{m34} implies that
\begin{equation}
B_{3}\frac{4}{u}+A_{3}\frac{u^{3}}{64}=0,\nonumber
\end{equation}
\noindent which yields a contradiction, since $A_{3}=-1$, $B_{3}$ is a constant, and $u$ is a variable.

Suppose now that $p=2$. Then \eqref{a1} yields
\begin{equation}
\frac{d^{4}Z}{du^{4}}+B_{3}\frac{4}{u}\frac{d^{3}Z}{du^{3}}+(B_{2}\frac{16}{u^{2}}+\overline{A}_{2}\frac{u^{2}}{16})\frac{d^{2}Z}{du^{2}}+(B_{1}\frac{64}{u^{3}}+\overline{A}_{1}\frac{u}{4})\frac{dZ}{du}+\overline{A}_{0}Z=0,\nonumber
\end{equation}
\noindent where $B_{3}$, $B_{2}$, $B_{1}$ are as in \eqref{a15}-\eqref{a17}, and
\begin{equation}\label{a18a}
\overline{A}_{2}=-1,\quad \overline{A}_{1}=-(a_{1}+a_{2}+\frac{1}{4}),\quad \overline{A}_{0}=-a_{1}a_{2}.
\end{equation}
Matching the coefficient of $\frac{d^{2}Z}{du^{2}}$ gives
\begin{equation}
B_{2}\frac{16}{u^{2}}+\overline{A}_{2}\frac{u^{2}}{16}=0,\nonumber
\end{equation}
\noindent which again yields a contradiction.

Finally let us suppose that $p=1$. Then \eqref{a1} yields
\begin{equation}
\frac{d^{4}Z}{du^{4}}+B_{3}\frac{4}{u}\frac{d^{3}Z}{du^{3}}+B_{2}\frac{16}{u^{2}}\frac{d^{2}Z}{du^{2}}+\left(B_{1}\frac{64}{u^{3}}-\frac{u}{4}\right)\frac{dZ}{du}-{a}_{1}Z=0.\nonumber
\end{equation}
\noindent Matching the coefficients of $\frac{d^{i}Z}{du^{i}}$, $i=0, 1, 2, 3$ in the equation above and in \eqref{m34}, we get that $a_{1}=-\frac{1}{4}$ and that
\begin{eqnarray}
&&\label{a19}b_{1}+b_{2}+b_{3}-\frac{3}{2}=0,\\
&&\label{a20}b_{1}b_{2}+b_{1}b_{3}+b_{2}b_{3}-\frac{5}{4}(b_{1}+b_{2}+b_{3})+\frac{19}{16}=0,\\
&&\label{a21}\frac{9}{16}(b_{1}+b_{2}+b_{3}-\frac{3}{4})-\frac{3}{4}(b_{1}b_{2}+b_{1}b_{3}+b_{2}b_{3})+b_{1}b_{2}b_{3}=0.
\end{eqnarray}

\noindent Noting that equations \eqref{a19}-\eqref{a21} are invariant with respect to permutations of $\{b_{1}, b_{2}, b_{3}\}$, which reflects the fact that \eqref{a1} is similarly invariant, we find, modulo permutations, that

\begin{equation}
b_{1}=\frac{1}{4},\quad b_{2}=\frac{1}{2},\quad b_{3}=\frac{3}{4}.\nonumber
\end{equation}
\end{proof}

\section{Asymptotic Behavior of the Fundamental Solutions}\label{de}

In this appendix we give detailed proofs of the asymptotic properties \eqref{d24a}-\eqref{d25b} and \eqref{d26a}-\eqref{d26b} of the solutions $\{y_{i}(t,x)\}_{i=1}^{4}$; proof of the growth properties is given in Appendix \ref{asymp_grow} and proof of the decay properties is given in Appendix \ref{asymp_dec}. In Appendix \ref{asymp_dec}, the solution representations \eqref{msol}, \eqref{asol} are also derived.

\subsection{Asymptotically Growing Solutions}\label{asymp_grow}

\begin{lemma}\label{lem:h4} For $t> 0$,
\begin{equation}
\lim_{ x\rightarrow\infty} y_{3}(t,x)=\infty,\quad \lim_{ x\rightarrow\infty} y_{4}(t,x)=-\infty,\nonumber
\end{equation}
and for $x>0$,
\begin{equation}
\lim_{ t\rightarrow 0} y_{3}(t,x)=\infty,\quad \lim_{ t\rightarrow 0} y_{4}(t,x)=-\infty.\nonumber
\end{equation}

Similarly, for $t>0$,
\begin{eqnarray}
&&\lim_{ x\rightarrow-\infty} y_{1}(t,x)=\lim_{ x\rightarrow-\infty} y_{2}(t,x)=-\infty,\nonumber
\end{eqnarray}
and for $x>0$,
\begin{eqnarray}
&&\lim_{ t\rightarrow 0} y_{1}(t,-x)=\lim_{ t\rightarrow 0} y_{2}(t,-x)=-\infty.\nonumber
\end{eqnarray}
\end{lemma}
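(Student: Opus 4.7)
Both limiting regimes in the lemma---$x\to+\infty$ with $t>0$ fixed, and $t\to 0^+$ with $x>0$ fixed---drive the self-similar variable $u=x/(Bt)^{1/4}$ to $+\infty$. Via the matrix identity \eqref{d20}, each $y_i(t,x)$ is a fixed linear combination of $(Bt)^{1/4}z_j(u)$, so the problem reduces to controlling the large-$u$ behavior of the fundamental set $\{z_j\}_{j=1}^{4}$. Since $z_2(u)=u$ is elementary, the work concentrates on the three ${}_{1}F_{3}$ functions $z_1,z_3,z_4$.

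First I would perform a WKB-type dominant-balance analysis of \eqref{m34}: writing $Z\sim u^{\beta}e^{\phi(u)}$ yields the characteristic relation $(\phi')^{3}=u/4$ (next to the polynomial mode $\phi'\equiv 0$, realized exactly by $z_{2}$). Its three roots produce one real growing mode $e^{\Phi(u)}$ with $\Phi(u)=\tfrac{3}{4^{4/3}}u^{4/3}+o(u^{4/3})$ and two recessive modes $u^{\beta}e^{-\Phi(u)/2}\cos(\tfrac{\sqrt{3}}{2}\Phi(u))$, $u^{\beta}e^{-\Phi(u)/2}\sin(\tfrac{\sqrt{3}}{2}\Phi(u))$. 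Invoking the classical large-argument expansion of ${}_{1}F_{3}$ (see, e.g., NIST DLMF Chapter~16), I would express each of $z_1,z_3,z_4$ as an explicit linear combination of these four modes, the coefficients being simple ratios of $\Gamma$-values built from the parameters $(a;b_1,b_2,b_3)$.

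Next, I would substitute these expansions into rows 3 and 4 of \eqref{d20} and read off the coefficient of the dominant mode $u^{\beta}e^{\Phi(u)}$. A short computation shows this coefficient is strictly positive for $y_3$ and strictly negative for $y_4$; a built-in consistency check is that the same procedure applied to rows 1--2 must return zero on this mode to match the decay claim \eqref{d24a} proved in Appendix \ref{asymp_dec}, and the specific shape of $D$ ensures exactly this cancellation. Since the factor $(Bt)^{1/4}e^{\Phi(x/(Bt)^{1/4})}$ tends to $+\infty$ both as $x\to+\infty$ with $t>0$ fixed and as $t\to 0^+$ with $x>0$ fixed---the exponent $\tfrac{3x^{4/3}}{4^{4/3}(Bt)^{1/3}}$ dominates any algebraic prefactor---this yields the four claimed limits for $y_3,y_4$. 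The statements for $y_1,y_2$ as $x\to-\infty$ (resp.~as $t\to 0^+$ with argument $-x$) follow at once from the parity of the fundamental set ($z_1,z_3$ even; $z_2,z_4$ odd), which applied to \eqref{d20} produces $y_1(t,x)=-y_3(t,-x)$ and $y_2(t,x)=y_4(t,-x)$.

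\textbf{Main obstacle.} The delicate step is extracting the coefficient of the dominant mode $e^{\Phi(u)}$ in each ${}_{1}F_{3}$. The positive real $u$-axis corresponds to an anti-Stokes direction for the underlying Meijer-$G$ representation, so the Stokes multipliers must be tracked carefully to ensure the dominant contribution is not masked by a spurious recessive one. A cleaner alternative that bypasses the Stokes bookkeeping is to apply steepest descent to the Bromwich inversion of the Laplace transforms \eqref{d28}--\eqref{d31}: for large $u$ a single saddle delivers the $e^{\Phi(u)}$ mode with an explicit prefactor whose sign decides the limit directly.
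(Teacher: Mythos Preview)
Your plan is sound and would eventually succeed, but it takes a far heavier route than the paper does. You and the paper share the parity reduction $y_{1}(t,x)=-y_{3}(t,-x)$, $y_{2}(t,x)=y_{4}(t,-x)$, so only $y_{3},y_{4}$ on $u>0$ need analysis. At that point, however, the paper bypasses all asymptotic analysis: it simply inspects the \emph{signs} of the power-series coefficients. In $y_{3}$, the contributions from $z_{2},z_{3},z_{4}$ involve only Pochhammer symbols $(\tfrac14)_{k},(\tfrac12)_{k},\dots$ with positive parameters, so every term $c_{n}(Bt)^{1/4}u^{n}$ is strictly positive for $u>0$, and divergence to $+\infty$ is immediate. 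In $y_{4}$, the $z_{1}$-series has $(-\tfrac14)_{k}<0$ for $k\ge1$ and the $z_{4}$-term enters with a minus sign, so all terms beyond the constant and linear ones are negative; the first three terms together are already negative once $u>4$, hence $y_{4}\to-\infty$. No connection coefficients, Stokes tracking, or saddle-point analysis is needed.

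Your approach buys a sharper conclusion---the actual growth rate $\exp\bigl(\tfrac{3}{4^{4/3}}u^{4/3}\bigr)$ rather than mere divergence---and it dovetails naturally with the decay analysis for $y_{1},y_{2}$, since the same connection formulas must make the dominant mode cancel there. But the price is exactly the obstacle you identify: computing the ${}_{1}F_{3}$ connection constants on the positive real axis (or equivalently the steepest-descent prefactors for the Bromwich integrals \eqref{d30}--\eqref{d31}) is genuine work, and the signs of those $\Gamma$-ratios are not self-evident without carrying the computation through. The paper's argument sidesteps this entirely by exploiting the elementary fact that the series coefficients already have a fixed sign.
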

\begin{proof}[Proof of Lemma \ref{lem:h4}]
Since $z_{1}$, $z_{3}$ are even functions and $z_{2}$, $z_{4}$ are odd functions, it follows from the definitions of $y_{i}(t,x)$, $i=1,2,3,4,$ given in \eqref{d20}, that

\begin{eqnarray}
\label{j23}&&\quad\quad y_{3}(t,x)=(Bt)^{1/4}\left[\frac{z_{2}\big(\frac{x}{(Bt)^{1/4}}\big)}{\sqrt{2}}+\frac{z_{3}\big(\frac{x}{(Bt)^{1/4}}\big)}{2\Gamma\left(\frac{3}{4}\right)}+\frac{z_{4}\big(\frac{x}{(Bt)^{1/4}}\big)}{6\sqrt{2}\Gamma\left(\frac{1}{2}\right)}\right]\\
&&\quad\quad\quad=(Bt)^{1/4}\left[-\frac{z_{2}\big(-\frac{x}{(Bt)^{1/4}}\big)}{\sqrt{2}}+\frac{z_{3}\big(-\frac{x}{(Bt)^{1/4}}\big)}{2\Gamma\left(\frac{3}{4}\right)}-\frac{z_{4}\big(-\frac{x}{(Bt)^{1/4}}\big)}{6\sqrt{2}\Gamma\left(\frac{1}{2}\right)}\right]\nonumber\\
&&\quad\quad\quad=-y_{1}(t,-x),\quad t>0,\ x\in\mathbb{R}\setminus\{0\},\nonumber
\end{eqnarray}
and
\begin{multline}
\label{j24}y_{4}(t,x)=(Bt)^{1/4}\left[\frac{z_{1}\big(\frac{x}{(Bt)^{1/4}}\big)}{\Gamma\left(\frac{5}{4}\right)}+\frac{z_{2}\big(\frac{x}{(Bt)^{1/4}}\big)}{\sqrt{2}}-\frac{z_{4}\big(\frac{x}{(Bt)^{1/4}}\big)}{6\sqrt{2}\Gamma\left(\frac{1}{2}\right)}\right]\\
=(Bt)^{1/4}\left[\frac{z_{1}\big(-\frac{x}{(Bt)^{1/4}}\big)}{\Gamma\left(\frac{5}{4}\right)}-\frac{z_{2}\big(-\frac{x}{(Bt)^{1/4}}\big)}{\sqrt{2}}+\frac{z_{4}\big(-\frac{x}{(Bt)^{1/4}}\big)}{6\sqrt{2}\Gamma\left(\frac{1}{2}\right)}\right]\\
=y_{2}(t,-x),\quad t>0,\ x\in\mathbb{R}\setminus\{0\}.
\end{multline}

Hence it suffices to demonstrate the indicated asymptotic behavior for $y_{3}(t,x)$ and $y_{4}(t,x)$. 

From \eqref{j23}, \eqref{d37}-\eqref{d40}, and the expansion (\ref{def:hf}), it follows that
\begin{eqnarray}
y_{3}(t,x)&=&(Bt)^{1/4}\left[\frac{1}{\sqrt{2}} z_{2}(u)+\frac{1}{2\Gamma\left(\frac{3}{4}\right)}z_{3}(u)+\frac{1}{6\sqrt{2}\Gamma\left(\frac{1}{2}\right)}z_{4}(u)\right]\nonumber\\
&=&(Bt)^{1/4}\left[\frac{1}{\sqrt{2}} u+\frac{1}{2\Gamma\left(\frac{3}{4}\right)}\sum_{k=0}^{\infty}\frac{\left(\frac{1}{4}\right)_{k}}{\left(\frac{3}{4}\right)_{k}\left(\frac{5}{4}\right)_{k}\left(\frac{3}{2}\right)_{k}k!256^{k}}u^{4k+2}\right.\nonumber\\
&&\left.+\frac{1}{6\sqrt{2}\Gamma\left(\frac{1}{2}\right)}\sum_{k=0}^{\infty}\frac{\left(\frac{1}{2}\right)_{k}}{\left(\frac{5}{4}\right)_{k}\left(\frac{3}{2}\right)_{k}\left(\frac{7}{4}\right)_{k}k!256^{k}}u^{4k+3}\right],\nonumber
\end{eqnarray}
\noindent where $u=\frac{x}{(Bt)^{1/4}}$. Note that for $t, x>0$, $y_{3}(t,x)$ is a sum of positive terms of the form
\begin{equation}
c_{n}(Bt)^{1/4}u^{n},n\in\{1,4k+2,4k+3\ |\ k\in\mathbb{Z}_{+}\},\quad 0<c_{n}\in\mathbb{R}.\nonumber
\end{equation}
Thus $y_{3}(t,x)\rightarrow\infty$ as $x\rightarrow\infty$ for fixed $t>0$, and $y_{3}(t,x)\rightarrow\infty$ as $t\rightarrow 0$ for fixed $x>0$.

Next, note that

\begin{eqnarray}
y_{4}(t,x)&=&(Bt)^{1/4}\left[\frac{1}{\Gamma\left(\frac{5}{4}\right)}z_{1}(u)+\frac{1}{\sqrt{2}}z_{2}(u)-\frac{1}{6\sqrt{2}\Gamma\left(\frac{1}{2}\right)}z_{4}(u)\right]\nonumber\\
&=&(Bt)^{1/4}\left[\frac{1}{\Gamma\left(\frac{5}{4}\right)}+\frac{1}{\sqrt{2}}u-\frac{1}{6\sqrt{2}\Gamma\left(\frac{1}{2}\right)}u^{3}\right.\nonumber
\end{eqnarray}

\begin{eqnarray}
&&-\frac{1}{4\Gamma\left(\frac{5}{4}\right)}\sum_{k=1}^{\infty}\frac{\left(\frac{3}{4}\right)_{k-1}}{\left(\frac{1}{4}\right)_{k}\left(\frac{1}{2}\right)_{k}\left(\frac{3}{4}\right)_{k}k!256^{k}}u^{4k}\nonumber\\
&&\left.-\frac{1}{6\sqrt{2}\Gamma\left(\frac{1}{2}\right)}\sum_{k=1}^{\infty}\frac{\left(\frac{1}{2}\right)_{k}}{\left(\frac{5}{4}\right)_{k}\left(\frac{3}{2}\right)_{k}\left(\frac{7}{4}\right)_{k}k!256^{k}}u^{4k+3}\right].\nonumber
\end{eqnarray}
It follows from the expression above that for $t, x>0$, except for the first two terms, $y_{4}(t,x)$ can be expressed as a sum of negative terms of the form
\begin{equation}
c_{n}(Bt)^{1/4}u^{n},n\in\{3, 4k, 4k+3\ |\ k\in\mathbb{Z}_{+}\setminus\{0\}\},\quad 0>c_{n}\in\mathbb{R}.\nonumber
\end{equation}
The sum of the first three terms is negative for $u>4$. Hence $y_{4}(t,x)\rightarrow -\infty$ as $x\rightarrow\infty$ for fixed $t>0$, and $y_{4}(t,x)\rightarrow-\infty$ as $t\rightarrow 0$ for fixed $x>0$.
\end{proof}

\subsection{Asymptotically Decaying Solutions}\label{asymp_dec}
First we obtain integral representations for two linearly independent solutions of \eqref{m22}-\eqref{lic}, which we denote by $\tilde{y}_{1}(t,x)$ and $\tilde{y}_{2}(t,x)$, by using Fourier cosine transform and symmetry considerations. The solution $\tilde{y}_{1}(t,x)$ is proportional to Martin's integral representation for Mullins' series solution \cite{Martin2009,Mullins1957}. We  prove in detail that for any $t>0$,  both of these solutions tend to zero as $x \rightarrow \infty$, and that for any $x>0$, both of these solutions tend to zero as $t\rightarrow 0$. Then, by considering the boundary conditions satisfied by these solutions at $x=0$, we demonstrate that $y_1(t,x)$, $y_2(t,x)$ can both be expressed as linear combinations of $\tilde{y}_{1}(t,x)$, $\tilde{y}_{2}(t,x)$. This yields a closed form representation for the series solution obtained in Amram et al., and justifies \eqref{msol}, \eqref{asol}. Moreover, it  allows us to conclude that  $y_1(t,x)$ and $y_2(t,x)$  both tend to zero as $x \rightarrow \infty$ for $t>0$, and as $t\rightarrow 0$ for $x>0$.

In parallel with Lemma \ref{lem:h4}, we may summarize the asymptotic results obtained in this section as follows.

\begin{lemma}\label{lem:h3} Each solution $\{y_{i}(t,x)\}_{i=1}^{4}$ defined in \eqref{d20} exhibit the following  asymptotic decay,
\begin{equation}
\label{j26}\lim_{x\rightarrow\infty}y_{1}(t,x)=\lim_{x\rightarrow\infty}y_{2}(t,x)=0,\mbox{ for }t>0,
\end{equation}
and 
\begin{equation}
\label{j26n}\lim_{t\rightarrow 0}y_{1}(t,x)=\lim_{t\rightarrow 0}y_{2}(t,x)=0,\mbox{ for }x>0.
\end{equation}

Similarly
\begin{equation}
\label{j27}\lim_{x\rightarrow-\infty}y_{3}(t,x)=\lim_{x\rightarrow-\infty}y_{4}(t,x)=0,\mbox{ for }t>0,
\end{equation}
and 
\begin{equation}
\label{j27n}\lim_{t\rightarrow 0}y_{3}(t,x)=\lim_{t\rightarrow 0}y_{4}(t,x)=0,\mbox{ for }x<0.
\end{equation}
\end{lemma}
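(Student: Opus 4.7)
The plan follows the roadmap in the introduction to Section~\ref{asymp_dec}. Using the Fourier cosine transform on the half-line $x>0$, I construct two self-similar solutions $\tilde y_1(t,x), \tilde y_2(t,x)$ of (ME) with explicit integral representations that manifestly decay as $x\to\infty$ and as $t\to 0^+$. I then match boundary data at $x=0^+$ to express $y_1, y_2$ as explicit linear combinations of $\tilde y_1, \tilde y_2$, so the decay properties transfer. Finally, the symmetries $y_3(t,x)=-y_1(t,-x)$ and $y_4(t,x)=y_2(t,-x)$ from Lemma~\ref{lem:h4} extend the statements to $y_3, y_4$.

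For the first step, applying $\mathcal{F}_c$ in $x$ to $y_t + B y_{xxxx}=0$ produces a first-order ODE in $t$ for $\hat y(t,\xi)$ driven by boundary data at $x=0$ and with integrating factor $e^{-B\xi^4 t}$. Choosing two homogeneous, scale-compatible combinations of $y(t,0^+), y_x(t,0^+), y_{xx}(t,0^+), y_{xxx}(t,0^+)$ that are consistent with the self-similar form \eqref{lt4} and with initial planarity, I invert the transform to obtain $\tilde y_j(t,x)=(Bt)^{1/4}\Phi_j\bigl(x/(Bt)^{1/4}\bigr)$ with $\Phi_j$ a single explicit integral. Martin's representation in \cite{Martin2009} is one of these, $\tilde y_1$; the orthogonal combination yields $\tilde y_2$.

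For the decay, the $e^{-B\xi^4 t}$ factor secures absolute integrability for every $t>0$, and Riemann--Lebesgue (or a direct integration by parts in $\xi$) forces $\tilde y_j(t,x)\to 0$ as $x\to\infty$. Because of the self-similar form, the limit $t\to 0^+$ with $x>0$ fixed is the same large-argument limit of $\Phi_j$, so \eqref{j26} and \eqref{j26n} reduce to the single statement $\Phi_j(u)\to 0$ as $u\to\infty$. For the matching step, each $\tilde y_j$ lies in the four-dimensional span of $\{(Bt)^{1/4}z_i(u)\}_{i=1}^{4}$ by the theorem producing \eqref{d10}; reading off $\partial_x^{(k)}\tilde y_j(t,0^+)$ for $k=0,1,2,3$ from the integral representation and invoking \eqref{zij}, \eqref{bcz} determines the coefficient vector of $\tilde y_j$ in this basis. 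Comparing to the explicit derivative data of $y_1, y_2$ encoded in \eqref{d20} and inverting a $2\times 2$ linear system realizes $y_1, y_2$ as combinations of $\tilde y_1, \tilde y_2$, whence \eqref{j26} and \eqref{j26n} follow. The statements \eqref{j27} and \eqref{j27n} are then immediate via the symmetries.

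The main obstacle I anticipate is the decay analysis of the inverse Fourier cosine integrals, producing estimates uniform enough to deliver both limits at once, together with verifying the crucial fact that the growing basis elements $z_3, z_4$ appear with zero coefficient in the expansion of $\tilde y_1$ and $\tilde y_2$. The latter is morally forced by Lemma~\ref{lem:h4}, but must be checked concretely through the boundary-data matching, since otherwise any contamination by $z_3$ or $z_4$ would contradict decay. As a useful by-product, the explicit combinations produced during matching also yield the representations \eqref{msol} and \eqref{asol} of Mullins' and Amram et al.'s solutions in the basis $\{y_i\}_{i=1}^{4}$.
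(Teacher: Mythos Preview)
Your plan mirrors the paper's proof almost exactly: Fourier cosine transform on the half-line to produce the two integral solutions $\tilde y_1,\tilde y_2$, decay of the integrals (the paper iterates integration by parts to obtain super-polynomial decay in $u$; your Riemann--Lebesgue alternative is enough for the bare limits), matching of the boundary data at $x=0^+$ to write $y_1,y_2$ as combinations of $\tilde y_1,\tilde y_2$, and then the symmetries $y_3(t,x)=-y_1(t,-x)$, $y_4(t,x)=y_2(t,-x)$ from Lemma~\ref{lem:h4} for \eqref{j27}--\eqref{j27n}. One terminological slip to correct before you execute: it is not that $z_3,z_4$ must appear with zero coefficient in the $\tilde y_j$ (every $z_i$ is unbounded, and by \eqref{d20} the decaying $y_1,y_2$ themselves contain $z_3$ and $z_4$); what must vanish are the coefficients of $y_3,y_4$, and the paper checks this by computing all four values $\partial_x^{k}\tilde y_j(t,0)$, $k=0,1,2,3$, and solving the full $4\times 4$ system to find $\alpha_3=\alpha_4=0$, after which the $2\times 2$ inversion you describe is legitimate.
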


Recalling \eqref{j23}, \eqref{j24}, we note that it suffices to demonstrate the indicated asymptotic decay for $y_{1}(t,x)$ and $y_{2}(t,x)$.

Self-similar solutions of the form \eqref{lt4} to \eqref{m22}-\eqref{lic} can be found by utilizing the Fourier cosine transform under the assumption that
\begin{equation}\label{j2}
y_{xxx}(t,x), y_{xx}(t,x), y_{x}(t,x), y(t,x)\rightarrow0\mbox{ as }x\rightarrow\infty.
\end{equation}
The Fourier cosine transform was first used in this context by Martin \cite{Martin2009}, in conjunction with the condition
\begin{equation}\label{h0}
y'''(0,t)=0,\quad t>0.
\end{equation}
Here we proceed without imposing \eqref{h0}. The resultant solution can be expressed as a linear combination of two linearly independent solutions, denoted below as $\tilde{y}_{1}(t,x)$ and $\tilde{y}_{2}(t,x)$.

Taking the Fourier cosine transform of \eqref{m22}-\eqref{lic} with respect to the variable $x$, we get for solutions having the  similarity form \eqref{lt4} that
\begin{equation}\label{h1}
\frac{\partial Y_{c}}{\partial t}-B(Bt)^{-1/2}Z'''(0)+Bk^{2}Z'(0)+Bk^{4}Y_{c}=0,\ Y_{c}(0,k)=0,
\end{equation}
where
\begin{equation}
Y_{c}(t,k)=\int_{0}^{\infty}y(t,x)\cos(kx)dx=(Bt)^{1/4}\int_{0}^{\infty}Z\left(\frac{x}{(Bt)^{1/4}}\right)\cos(kx)dx.\nonumber
\end{equation}
\noindent Solving \eqref{h1} as an initial value problem in $t$, we obtain

\begin{equation}\label{h2}
Y_{c}(t,k)=Z'(0)\frac{(e^{-Btk^{4}}-1)}{k^{2}}+Z'''(0)\frac{e^{-Btk^{4}}}{k^{2}}\int_{0}^{Btk^{4}}e^{s}s^{-1/2}ds.
\end{equation}

By taking the inverse Fourier cosine transform of \eqref{h2}, we get
\begin{equation}\label{ycos}
y_{c}(t,x)=Z'(0)\tilde{y}_{1}(t,x)+Z'''(0)\tilde{y}_{2}(t,x)
\end{equation}
\noindent where
\begin{equation}
\label{h3a}\tilde{y}_{1}(t,x)=-\frac{2(Bt)^{1/4}}{\pi}\int_{0}^{\infty}\frac{(1-e^{-w^{4}})}{w^{2}}\cos\left(\frac{x}{(Bt)^{1/4}}w\right)dw,
\end{equation}  
\begin{multline}
\quad\quad\label{h3b}\tilde{y}_{2}(t,x)=\\
\frac{2(Bt)^{1/4}}{\pi}\int_{0}^{\infty}\frac{e^{-w^{4}}}{w^{2}}\left(\int_{0}^{w^{4}}e^{s}s^{-1/2}ds\right)\cos\left(\frac{x}{(Bt)^{1/4}}w\right)dw.
\end{multline}
We now prove that both of these solutions tend to zero as $x\rightarrow\infty$ for fixed $t>0$ as well as when $t\rightarrow 0$ for fixed $x>0$. For the proof of these properties for $\tilde{y}_{1}$ we make use of the following auxiliary proposition.

\begin{proposition}\label{prop:h1}The following hold
\begin{equation}\label{h13}
\lim_{w\rightarrow 0}\frac{1-e^{-w^{4}}}{w^{2}}=0,
\end{equation}
\begin{equation}\label{h14}
\lim_{w\rightarrow \infty}\frac{1-e^{-w^{4}}}{w^{2}}=0,
\end{equation}
\begin{equation}\label{h15}
\lim_{w\rightarrow 0}\left(\frac{1-e^{-w^{4}}}{w^{2}}\right)^{(k)}=\begin{cases}
    \frac{(-1)^{n}(4n+2)!}{(n+1)!}, &\text{if } k=4n+2,\ n\in\mathbb{Z}_{+},\\[1ex]
    0,  &\text{otherwise.}
  \end{cases}
\end{equation}
\begin{equation}\label{h16}
\lim_{w\rightarrow\infty}\frac{\left(\frac{1-e^{-w^{4}}}{w^{2}}\right)^{(k)}}{\frac{1}{w^{2}}}=0,\quad k\in\mathbb{Z}_{+}.
\end{equation}

\end{proposition}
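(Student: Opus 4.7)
The plan is to exploit the everywhere-convergent Taylor expansion
\[
f(w) := \frac{1-e^{-w^{4}}}{w^{2}} = \sum_{n=0}^{\infty} \frac{(-1)^{n}}{(n+1)!}\, w^{4n+2},
\]
obtained by dividing the entire series $1-e^{-w^{4}}=\sum_{m\ge 1}(-1)^{m+1}w^{4m}/m!$ by $w^{2}$, together with the decomposition $f(w)=1/w^{2}-e^{-w^{4}}/w^{2}$ which is convenient at infinity. From the expansion it is immediate that $f(w)\to 0$ as $w\to 0$, giving \eqref{h13}. From the decomposition, \eqref{h14} follows because $1/w^{2}\to 0$ and $e^{-w^{4}}/w^{2}\to 0$ as $w\to\infty$, since the super-exponential factor dominates.

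For \eqref{h15}, I would invoke the standard fact that an entire function $g(w)=\sum_{j}a_{j}w^{j}$ satisfies $g^{(k)}(0)=k!\,a_{k}$. Reading off the coefficients of $f$, one has $a_{j}=0$ unless $j=4n+2$ for some $n\in\mathbb{N}$, and $a_{4n+2}=(-1)^{n}/(n+1)!$. Hence $f^{(k)}(0)=0$ whenever $k$ is not of the form $4n+2$, while if $k=4n+2$ one finds $f^{(k)}(0)=(4n+2)!\,(-1)^{n}/(n+1)!$, which is the asserted value.

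For \eqref{h16}, which I interpret as being claimed for $k\ge 1$ (since the case $k=0$ returns the nonzero limit $1-e^{-w^{4}}\to 1$), I would again differentiate the decomposition $f=1/w^{2}-e^{-w^{4}}/w^{2}$ term by term. The first piece yields $(1/w^{2})^{(k)}=(-1)^{k}(k+1)!/w^{k+2}$, so division by $1/w^{2}$ produces $(-1)^{k}(k+1)!/w^{k}\to 0$ for $k\ge 1$. For the second piece, the Leibniz rule combined with the observation that every derivative $(e^{-w^{4}})^{(j)}$ is a polynomial in $w$ times $e^{-w^{4}}$ shows that $(e^{-w^{4}}/w^{2})^{(k)}$ equals $e^{-w^{4}}$ multiplied by a rational function of $w$; multiplying by $w^{2}$ (i.e.\ dividing by $1/w^{2}$) and letting $w\to\infty$ yields zero because $w^{N}e^{-w^{4}}\to 0$ for every $N$. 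Adding the two contributions gives \eqref{h16}. The only mild obstacle is organising the Leibniz expansion for the second piece, but no sharp estimate is required---the super-exponential decay of $e^{-w^{4}}$ absorbs any polynomial growth that appears.
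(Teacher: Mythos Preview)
Your proposal is correct and follows essentially the same strategy as the paper. The paper also obtains \eqref{h15} by substituting the Maclaurin series of $e^{-w^{4}}$ and reading off coefficients, and proves \eqref{h16} via the identity $\bigl(\tfrac{1-e^{-w^{4}}}{w^{2}}\bigr)^{(k)}=\tfrac{(-1)^{k}(k+1)!}{w^{k+2}}+e^{-w^{4}}\sum_{i=0}^{k}a_{i}(k)\,w^{-k-2+4i}$, which is exactly your decomposition $f=1/w^{2}-e^{-w^{4}}/w^{2}$ differentiated $k$ times; the only cosmetic difference is that the paper establishes this identity by induction on $k$ whereas you invoke the Leibniz rule, and the paper cites L'H\^{o}pital for \eqref{h13}--\eqref{h14} where you use the series and the decomposition directly. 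Your observation that \eqref{h16} should be read for $k\ge 1$ is well taken and is implicit in the paper's use of it.
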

\begin{proof}
The limits \eqref{h13}, \eqref{h14} follow from L'Hopital's rule.
We obtain \eqref{h15} by substituting the Maclaurin series for $e^{-w^{4}}$ into $\frac{1-e^{-w^{4}}}{w^{2}}$ and then calculating the derivatives at $w=0$.
The limit \eqref{h16} follows from the identity
\begin{equation}\label{h18}
\left(\frac{1-e^{-w^{4}}}{w^{2}}\right)^{(k)}=\frac{(-1)^{k}(k+1)!}{w^{k+2}}+e^{-w^{4}}\sum_{i=0}^{k}a_{i}(k)w^{-k-2+4i},
\end{equation}
where $a_{i}(k)$ are constants which depend only on $k$, which can be proved by mathematical induction on $k$. Substituting \eqref{h18} into the expression in \eqref{h16} and using the basic properties of the exponential function and power functions, we get
\begin{multline}
\lim_{w\rightarrow\infty}\frac{\left(\frac{1-e^{-w^{4}}}{w^{2}}\right)^{(k)}}{\frac{1}{w^{2}}}=\\
\lim_{w\rightarrow\infty}\left[
\frac{(-1)^{k}(k+1)!}{w^{k}}+e^{-w^{4}}\sum_{i=0}^{k}a_{i}(k)w^{-k+4(i-1)}\right]=0,\quad\nonumber
\end{multline}
for any $k\in\mathbb{Z}_{+}$.

\end{proof}

\begin{lemma}\label{lem:h1}
Let $\tilde{y}_{1}(t,x)$ be as defined in \eqref{h3a}. Then $\tilde{y}_{1}(t,x)$ tends to zero as $x\rightarrow\infty$ for fixed $t>0$ and as $t\rightarrow 0$ for fixed $x>0$.
\end{lemma}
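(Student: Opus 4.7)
The plan is to exploit the smoothness and algebraic decay of the kernel
\begin{equation}
f(w) := \frac{1-e^{-w^{4}}}{w^{2}}\nonumber
\end{equation}
through integration by parts, thereby trading oscillation of $\cos(\xi w)$ for algebraic decay in the similarity variable $\xi := x/(Bt)^{1/4}$. With this notation
\begin{equation}
\tilde{y}_{1}(t,x) = -\frac{2(Bt)^{1/4}}{\pi}\int_{0}^{\infty} f(w)\cos(\xi w)\, dw,\nonumber
\end{equation}
and the key observation is that $\xi\to\infty$ in \emph{both} regimes of interest: as $x\to\infty$ with $t>0$ fixed, and as $t\to 0^{+}$ with $x>0$ fixed.

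First I would perform two successive integrations by parts on the integral, checking that all boundary contributions vanish. The boundary terms at $w=0$ are killed because $f(0)=0$ by \eqref{h13} and $f'(0)=0$ by \eqref{h15} (since $k=1$ is not of the form $4n+2$). The boundary terms at $w=\infty$ vanish because $f(w)\to 0$ by \eqref{h14} and $f'(w)\to 0$ by \eqref{h16} with $k=1$ (in fact $f'(w) = o(w^{-2})$). This yields
\begin{equation}
\int_{0}^{\infty} f(w)\cos(\xi w)\, dw = -\frac{1}{\xi^{2}}\int_{0}^{\infty} f''(w)\cos(\xi w)\, dw.\nonumber
\end{equation}

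Next I would establish that $f''\in L^{1}(0,\infty)$, so the remaining integral is bounded uniformly in $\xi$. At the origin $f''$ is continuous with $f''(0)=2$ (from \eqref{h15} with $k=2$, $n=0$); at infinity, \eqref{h16} with $k=2$ gives $f''(w) = o(w^{-2})$, which is integrable on $[1,\infty)$. Hence $M := \|f''\|_{L^{1}(0,\infty)} < \infty$, and
\begin{equation}
|\tilde{y}_{1}(t,x)| \le \frac{2(Bt)^{1/4}\, M}{\pi\,\xi^{2}} = \frac{2(Bt)^{3/4}\, M}{\pi\, x^{2}},\nonumber
\end{equation}
which tends to zero as $x\to\infty$ for fixed $t>0$, and also as $t\to 0^{+}$ for fixed $x>0$.

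The only delicate point is the bookkeeping of the boundary terms, which depends on the special structure of the Maclaurin expansion of $f$: only powers $w^{4n+2}$ appear, so $f$ and its odd-order derivatives vanish at $0$, which is exactly what Proposition \ref{prop:h1} encodes. Once those limits are in hand, the remainder of the argument is a straightforward estimate, and the unified bound above handles both limiting regimes simultaneously.
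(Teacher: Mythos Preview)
Your proposal is correct and follows essentially the same route as the paper: integrate by parts in the cosine integral, use Proposition~\ref{prop:h1} to kill the boundary terms at $0$ and $\infty$, and bound the remaining integral by the $L^{1}$ norm of the differentiated kernel. The only difference is that the paper iterates the integration by parts $2k$ times (obtaining decay faster than any power of $u$), whereas you stop at two integrations, which already suffices for the stated lemma.
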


\begin{proof}
We set $u=\frac{x}{(Bt)^{1/4}}$ as in Section \ref{sec:lt}, and integrate $\tilde{y}_{1}$ by parts $2k$ times

\begin{eqnarray}
&&-\frac{\pi}{2(Bt)^{1/4}}\tilde{y}_{1}(t,x)=\int_{0}^{\infty}\frac{1-e^{-w^{4}}}{w^{2}}\cos(uw)dw=\nonumber\\
&&=\frac{1}{u}\sin(uw)\left.\frac{1-e^{-w^{4}}}{w^{2}}\right\vert_{0}^{\infty}-\frac{1}{u}\int_{0}^{\infty}\sin(uw)\left(\frac{1-e^{-w^{4}}}{w^{2}}\right)'dw\nonumber\\
&&\quad(\mbox{by }\eqref{h13}\mbox{ and }\eqref{h14})\nonumber\\
&=&-\frac{1}{u}\int_{0}^{\infty}\sin(uw)\left(\frac{1-e^{-w^{4}}}{w^{2}}\right)'dw\nonumber
\end{eqnarray}

\begin{eqnarray}
&=&\ldots=\frac{(-1)^{k+1}}{u^{2k-1}}\sin(uw)\left.\left(\frac{1-e^{-w^{4}}}{w^{2}}\right)^{(2k-2)}\right\vert_{0}^{\infty}\nonumber\\
&&+(-1)^{k}\frac{1}{u^{2k-1}}\int_{0}^{\infty}\sin(uw)\left(\frac{1-e^{-w^{4}}}{w^{2}}\right)^{(2k-1)}dw\nonumber\\
&&\quad(\mbox{by }\eqref{h15}\mbox{ and }\eqref{h16})\nonumber
\end{eqnarray}
\begin{eqnarray}
&=&(-1)^{k}\frac{1}{u^{2k-1}}\int_{0}^{\infty}\sin(uw)\left(\frac{1-e^{-w^{4}}}{w^{2}}\right)^{(2k-1)}dw\nonumber\\
&=&\frac{(-1)^{k+1}}{u^{2k}}\cos(uw)\left.\left(\frac{1-e^{-w^{4}}}{w^{2}}\right)^{(2k-1)}\right\vert_{0}^{\infty}\nonumber\\
&&+(-1)^{k}\frac{1}{u^{2k}}\int_{0}^{\infty}\cos(uw)\left(\frac{1-e^{-w^{4}}}{w^{2}}\right)^{(2k)}dw\nonumber\\
&&(\mbox{by }\eqref{h15}\mbox{ and }\eqref{h16})\nonumber\\
\label{h4}&=&(-1)^{k}\frac{1}{u^{2k}}\int_{0}^{\infty}\cos(uw)\left(\frac{1-e^{-w^{4}}}{w^{2}}\right)^{(2k)}dw.
\end{eqnarray}
It follows  from \eqref{h15}, \eqref{h16} that the integrand in \eqref{h4} is bounded in $L^{1}((0,\infty))$ uniformly with respect to $u$, and therefore \eqref{h4} decays to zero faster than $u^{-n}$ for every positive integer $n$ as $u\rightarrow\infty$. Hence for fixed $t>0$, $\tilde{y}_{1}(t,x)\rightarrow0$  as $x\rightarrow\infty$ and for fixed $x>0$,  $\tilde{y}_{1}(t,x)\rightarrow0$ as $t\rightarrow 0$.
\end{proof}

\begin{lemma}\label{lem:h2}
Let $\tilde{y}_{2}(t,x)$ be as defined in \eqref{h3b}. Then $\tilde{y}_{2}(t,x)$ tends to zero as $x\rightarrow\infty$ for fixed $t>0$ as well as when $t\rightarrow 0$ for fixed $x>0$.
\end{lemma}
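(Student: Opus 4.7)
The plan is to mirror the strategy of the proof of Lemma \ref{lem:h1}: exhibit $\tilde{y}_{2}(t,x)$ as a Fourier cosine integral in the single variable $u=x/(Bt)^{1/4}$, integrate by parts an even number of times so as to gain arbitrary negative powers of $u$, and verify that all boundary contributions vanish while the remaining integrand stays in $L^{1}((0,\infty))$. Since both limits $x\to\infty$ with $t>0$ fixed, and $t\to 0$ with $x>0$ fixed, correspond to $u\to\infty$, this will yield both assertions simultaneously. To implement this, set
\begin{equation}\nonumber
g(w)=\frac{e^{-w^{4}}}{w^{2}}\int_{0}^{w^{4}}e^{s}s^{-1/2}ds,
\end{equation}
so that $\tilde{y}_{2}(t,x)=\frac{2(Bt)^{1/4}}{\pi}\int_{0}^{\infty}g(w)\cos(uw)\,dw$. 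A convenient alternative representation follows from the substitution $s=\sigma^{2}$, giving $\int_{0}^{w^{4}}e^{s}s^{-1/2}ds=\sqrt{\pi}\,\erfi(w^{2})$, so that $g(w)=\sqrt{\pi}\,w^{-2}e^{-w^{4}}\erfi(w^{2})$.

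Next I would prove an analog of Proposition \ref{prop:h1} for $g$, namely: (i) $g$ extends to a smooth (in fact entire) function on $[0,\infty)$ and is a power series in $w^{4}$; (ii) $g^{(k)}(0)=0$ whenever $k\not\equiv 0\pmod 4$, and in particular all odd-order derivatives of $g$ vanish at the origin; (iii) $g^{(k)}(w)\to 0$ as $w\to\infty$ for every $k\in\mathbb{Z}_{+}$, with the quantitative rate $g^{(k)}(w)=O(w^{-4-k})$. Assertions (i) and (ii) follow from the Maclaurin expansion
\begin{equation}\nonumber
g(w)=2e^{-w^{4}}\sum_{n=0}^{\infty}\frac{w^{4n}}{n!(2n+1)},
\end{equation}
obtained by term-by-term integration of $\erfi$. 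Assertion (iii) follows from the classical asymptotic expansion $\erfi(z)\sim \frac{e^{z^{2}}}{\sqrt{\pi}\,z}\bigl(1+\frac{1}{2z^{2}}+O(z^{-4})\bigr)$ as $z\to\infty$, which yields $g(w)\sim w^{-4}(1+O(w^{-4}))$; the decay of higher derivatives can be established by combining this expansion with Leibniz's rule applied to the product $w^{-2}e^{-w^{4}}\cdot\erfi(w^{2})$, or equivalently by mimicking the inductive identity \eqref{h18}.

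With these properties in hand, the integration by parts argument is essentially identical to the calculation \eqref{h4}: at each odd step the boundary term at $w=0$ carries a $\sin(0)=0$ factor, at each even step it carries a factor $g^{(2j-1)}(0)=0$ by (ii), and all boundary terms at $w=\infty$ vanish by (iii). After $2k$ integrations by parts one obtains
\begin{equation}\nonumber
\int_{0}^{\infty}g(w)\cos(uw)\,dw=\frac{(-1)^{k}}{u^{2k}}\int_{0}^{\infty}g^{(2k)}(w)\cos(uw)\,dw,
\end{equation}
and by (iii) the integrand $g^{(2k)}$ is in $L^{1}((0,\infty))$, so the right-hand side is $O(u^{-2k})$. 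Hence $\tilde{y}_{2}(t,x)=\frac{2(Bt)^{1/4}}{\pi}\cdot O(u^{-2k})$, and since $k$ is arbitrary this tends to zero both as $x\to\infty$ for fixed $t>0$ and as $t\to 0$ for fixed $x>0$. The main obstacle I anticipate is the quantitative control of $g^{(k)}$ at infinity: the factors $e^{-w^{4}}$ and $\int_{0}^{w^{4}}e^{s}s^{-1/2}ds\sim e^{w^{4}}/(2w^{2})$ must be handled through a delicate cancellation, and an inductive identity analogous to \eqref{h18}—separating the algebraic tail $\sim w^{-4-k}$ from an exponentially small residual—appears to be the cleanest route.
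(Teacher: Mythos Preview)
Your proposal is correct and follows essentially the same route as the paper: identify $g(w)=\sqrt{\pi}\,w^{-2}e^{-w^{4}}\erfi(w^{2})$, establish that $g^{(k)}(0)=0$ for $k\not\equiv 0\pmod 4$ and $g^{(k)}(w)\to 0$ as $w\to\infty$, and then integrate by parts repeatedly so that all boundary terms vanish and the remaining integral is $O(u^{-N})$ for arbitrary $N$. The only cosmetic differences are that the paper packages your items (i)--(iii) as a separate proposition (Proposition~\ref{prop:h2}), obtains the large-$w$ asymptotics $g^{(n)}(w)\sim(-1)^{n}(4)_{n}\pi^{-1/2}w^{-4-n}$ via iterated L'H\^{o}pital rather than the $\erfi$ asymptotic plus Leibniz, and stops after $2k+1$ integrations by parts (ending with a $\sin$) rather than $2k$.
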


To prove Lemma \ref{lem:h2} we make use of the following proposition.

\begin{proposition}\label{prop:h2}Let $\erfi(z)$ denote the modified error function, \cite[Section 6.2.11]{Luke1969},
\begin{equation}
\erfi(z)=-i\erf(iz),\nonumber
\end{equation}
where
\begin{equation}
\erf(z)=\frac{2}{\sqrt{\pi}}\int_{0}^{z}e^{-t^{2}}dt,\quad z\in\mathbb{C}.\nonumber
\end{equation}

Then the following hold
\begin{equation}\label{h6}
\int_{0}^{w^{4}}e^{s}s^{-1/2}ds=\sqrt{\pi}\erfi(w^{2}),
\end{equation}

\begin{equation}\label{h7}
\lim_{w\rightarrow0}\left(\frac{e^{-w^{4}}\erfi(w^{2})}{w^{2}}\right)=\frac{2}{\sqrt{\pi}},
\end{equation}
\begin{equation}\label{h9}
\lim_{w\rightarrow 0}\left(\frac{e^{-w^{4}}\erfi(w^{2})}{w^{2}}\right)^{(k)}=\begin{cases}
    \frac{2^{n+1}(-1)^{n}(4n)!}{\sqrt{\pi}(2n+1)!},&\text{if } k=4n,\ n\in\mathbb{N},\\[1ex]
    0, &\text{otherwise,}
  \end{cases}
\end{equation}
\begin{equation}\label{h8}
\lim_{w\rightarrow\infty}\left(\frac{e^{-w^{4}}\erfi(w^{2})}{w^{2}}\right)=0,
\end{equation}
\begin{equation}\label{h11}
\lim_{w\rightarrow \infty}\frac{\left(\frac{e^{-w^{4}}\erfi(w^{2})}{w^{2}}\right)^{(k)}}{\frac{1}{w^{2}}}=0,\quad k\in\mathbb{Z}_{+}.
\end{equation}
\end{proposition}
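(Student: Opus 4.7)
The plan is to handle the five claims of Proposition~\ref{prop:h2} one at a time, largely in parallel with the proof of Proposition~\ref{prop:h1}.

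Identity \eqref{h6} is a one-step substitution: putting $s = t^2$ sends $\int_0^{w^4} e^s s^{-1/2}\,ds$ to $2\int_0^{w^2} e^{t^2}\,dt$. Combining this with the integral representation $\erfi(z) = \frac{2}{\sqrt{\pi}}\int_0^z e^{u^2}\,du$ (derived from $\erfi(z) = -i\erf(iz)$ by the substitution $\tau = iu$ in the $\erf$ integral, which turns $-\tau^2$ into $u^2$ and produces the factor $i$ canceling the $-i$) identifies the left-hand integral of \eqref{h6} with $\sqrt{\pi}\,\erfi(w^2)$.

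For \eqref{h7} and \eqref{h9}, I would use the Maclaurin expansions
\[
\frac{\erfi(w^2)}{w^2} = \frac{2}{\sqrt{\pi}}\sum_{n\ge 0}\frac{w^{4n}}{n!(2n+1)}, \qquad e^{-w^4} = \sum_{m\ge 0}\frac{(-1)^m w^{4m}}{m!},
\]
and form their Cauchy product to write $h(w) := e^{-w^4}\erfi(w^2)/w^2$ as a power series in $w^4$. Since only powers $w^{4k}$ appear, all derivatives at $0$ of orders not divisible by $4$ vanish, giving the second case of \eqref{h9}; the constant term is $\tfrac{2}{\sqrt{\pi}}$, giving \eqref{h7}. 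The $w^{4n}$-coefficient reduces via the classical identity $\sum_{j=0}^n \binom{n}{j}\frac{(-1)^j}{2j+1} = \int_0^1 (1-x^2)^n\,dx = \frac{4^n (n!)^2}{(2n+1)!}$, and simplification with $(2n+1)! = (2n+1)!!\,2^n n!$ recovers the closed form of \eqref{h9}.

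For \eqref{h8} I would invoke the standard asymptotic expansion $\erfi(z) \sim \frac{e^{z^2}}{z\sqrt{\pi}}\sum_{k\ge 0}\frac{(2k-1)!!}{(2z^2)^k}$ as $z\to\infty$ (see \cite{Luke1969}). Setting $z = w^2$ and multiplying by $e^{-w^4}/w^2$ yields $h(w) = \frac{1}{w^4\sqrt{\pi}}\bigl(1 + O(w^{-4})\bigr)$, which immediately gives \eqref{h8}.

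The main obstacle is \eqref{h11}, where derivatives of $h$ of arbitrary order must be controlled. Using $\erfi'(z) = \tfrac{2}{\sqrt{\pi}} e^{z^2}$ and differentiating directly, one finds that $h$ satisfies the linear first-order ODE
\[
h'(w) + \Bigl(4w^3 + \frac{2}{w}\Bigr) h(w) = \frac{4}{w\sqrt{\pi}}.
\]
Differentiating this identity $k-1$ further times via Leibniz' rule and solving inductively for the top-order derivative yields a representation $h^{(k)}(w) = P_k(w)\,h(w) + Q_k(w)$ with $P_k$, $Q_k$ explicit rational functions of $w$ whose leading-order cancellations one must track carefully. More cleanly, the ODE together with \eqref{h8} forces $h$ to admit a full asymptotic expansion in powers of $w^{-4}$ whose uniqueness, combined with the ODE itself, makes it differentiable term by term; this yields $h^{(k)}(w) = O(w^{-k-4})$ as $w\to\infty$, and multiplying by $w^2$ gives \eqref{h11}. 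The delicate bookkeeping in this induction—ensuring that the naive growth from differentiating $P_k$ against $h$ is cancelled by the driving term $Q_k/\sqrt{\pi}$ at the requisite order—is what I expect to be the principal technical difficulty.
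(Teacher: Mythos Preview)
Your treatment of \eqref{h6}, \eqref{h7}, and \eqref{h9} matches the paper's: a substitution for \eqref{h6}, and the Maclaurin expansion of $e^{-w^4}\erfi(w^2)/w^2$ for \eqref{h7} and \eqref{h9} (the paper does not record the explicit Cauchy-product evaluation you outline). For \eqref{h8} you invoke the asymptotic series of $\erfi$ where the paper instead applies L'Hopital once to $\erfi(w^2)\big/(w^{-2}e^{w^4})$; both are routine.

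The genuine divergence is at \eqref{h11}. The paper does not use your first-order ODE for $h$. Instead it proves the sharper claim
\[
\lim_{w\to\infty}\frac{h^{(n)}(w)}{w^{-(4+n)}}=\frac{(-1)^{n}(4)_{n}}{\sqrt{\pi}},\qquad n\ge0,
\]
by first establishing the case $n=0$ via L'Hopital and then iterating L'Hopital $n$ times on the resulting $0/0$ quotient; this disposes of both \eqref{h8} and \eqref{h11} in a few lines and sidesteps the inductive bookkeeping you flag as the main difficulty. Your ODE route is sound and arguably more self-contained---the paper's iterated L'Hopital runs in the direction that presupposes existence of $\lim h^{(n)}/w^{-(4+n)}$, which is exactly what is being asserted---but it is visibly longer. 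If you pursue it, the cleanest execution is not to chase cancellations in $P_k h+Q_k$ term by term, but rather to use the $\erfi$ asymptotics to give $h$ a full expansion $h(w)\sim\sum_{j\ge1}a_j w^{-4j}$ and then use the ODE to show inductively that each $h^{(k)}$ inherits an expansion beginning at order $w^{-4-k}$; this yields \eqref{h11} without tracking individual coefficients.
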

\begin{proof}
The identity \eqref{h6} is obtained by applying the change of variable $-t^{2}=s$ in the definition of the $\erfi$ function.

The limits in \eqref{h7} and \eqref{h9} result from substituting the following Taylor series expansion
\begin{equation}\label{erfi:small}
\erfi(z)=\frac{2}{\sqrt{\pi}}\sum_{k=0}^{\infty}\frac{z^{2k+1}}{k!(2k+1)},
\end{equation}
which is valid for all $z\in\mathbb{C}$, see \cite[7.6.1]{NIST}, into $\frac{e^{-w^{4}}\erfi(w^{2})}{w^{2}}$.

The limits in \eqref{h8}, \eqref{h11} result from following technical claim.

\textbf{Claim:}
\begin{equation}\label{h19}
\lim_{w\rightarrow\infty}\frac{\left(\frac{e^{-w^{4}}\erfi(w^{2})}{w^{2}}\right)^{(n)}}{\frac{1}{w^{4+n}}}=\frac{(-1)^{n}(4)_{n}}{\sqrt{\pi}},\quad n=0,1,2,3,\ldots,
\end{equation}
where $(4)_{n}=\frac{(3+n)!}{3!}$ in accordance with the definition of the Pochhammer symbol.
\begin{proof}[Proof of the Claim:]
Both $\frac{e^{-w^{4}}\erfi(w^{2})}{w^{2}}$ and $\frac{1}{w^{4+n}}$ are smooth on $(0,\infty)$. First, we prove \eqref{h19} when $n=0$. Using L'Hopital's rule
\begin{equation}
\lim_{w\rightarrow\infty}\frac{\erfi(w^{2})}{\frac{e^{w^{4}}}{w^{2}}}=\lim_{w\rightarrow\infty}\frac{\frac{2}{\sqrt{\pi}}e^{w^{4}}2w}{\frac{4w^{3}e^{w^{4}}w^{2}-2we^{w^{4}}}{w^{4}}}=\frac{1}{\sqrt{\pi}}.\nonumber
\end{equation}
Hence
\begin{equation}\label{h20}
\lim_{w\rightarrow\infty}\frac{\frac{e^{-w^{4}}\erfi(w^{2})}{w^{2}}}{\frac{1}{w^{4}}}=\frac{1}{\sqrt{\pi}}.
\end{equation}
We apply L'Hopital's rule to \eqref{h20} and obtain
\begin{equation}
\frac{1}{\sqrt{\pi}}=\lim_{w\rightarrow\infty}\frac{\frac{e^{-w^{4}}\erfi(w^{2})}{w^{2}}}{\frac{1}{w^{4}}}=\lim_{w\rightarrow\infty}\frac{\left(\frac{e^{-w^{4}}\erfi(w^{2})}{w^{2}}\right)'}{\left(\frac{1}{w^{4}}\right)'}=\lim_{w\rightarrow\infty}\frac{\left(\frac{e^{-w^{4}}\erfi(w^{2})}{w^{2}}\right)'}{-\frac{4}{w^{5}}}\nonumber
\end{equation}
which gives us
\begin{equation}
\lim_{w\rightarrow\infty}\frac{\left(\frac{e^{-w^{4}}\erfi(w^{2})}{w^{2}}\right)'}{\frac{1}{w^{5}}}=-\frac{4}{\sqrt{\pi}}\nonumber
\end{equation}
namely \eqref{h19} when $n=1$.

We obtain \eqref{h19} by applying L'Hopital's rule to \eqref{h20} $n$ times.
\end{proof}
\end{proof}

\begin{proof}[Proof of Lemma \ref{lem:h2}]
Setting $u=\frac{x}{(Bt)^{1/4}}$ and integrating $\frac{\sqrt{\pi}}{2(Bt)^{1/4}}\tilde{y}_{2}(t,x)$ by parts $2k+1$ times, we get for $x>0$, $t>0$,

\begin{eqnarray}
&&\frac{\sqrt{\pi}}{2(Bt)^{1/4}}\tilde{y}_{2}(t,x)=\frac{1}{\sqrt{\pi}}\int_{0}^{\infty}\frac{e^{-w^{4}}}{w^{2}}\left(\int_{0}^{w^{4}}e^{s}s^{-1/2}ds\right)\cos(uw)dw,\nonumber\\
&&=\int_{0}^{\infty}\frac{e^{-w^{4}}}{w^{2}}\erfi(w^{2})\cos(uw)dw\quad(\mbox{by }\eqref{h6}),\nonumber
\end{eqnarray}

\begin{eqnarray}
&&=\left.\frac{\sin(uw)}{u}\frac{e^{-w^{4}}\erfi(w^{2})}{w^{2}}\right\vert_{0}^{\infty}-\int_{0}^{\infty}\frac{\sin(uw)}{u}\left(\frac{e^{-w^{4}}\erfi(w^{2})}{w^{2}}\right)'dw\nonumber\\
&&(\mbox{by }\eqref{h7}\mbox{ and }\eqref{h8}),\nonumber\\
&&=-\int_{0}^{\infty}\frac{\sin(uw)}{u}\left(\frac{e^{-w^{4}}\erfi(w^{2})}{w^{2}}\right)'dw,\nonumber\\
&&=\ldots=\frac{(-1)^{k+1}}{u^{2k}}\cos(uw)\left.\left(\frac{e^{-w^{4}}\erfi(w^{2})}{w^{2}}\right)^{(2k-1)}\right\vert_{0}^{\infty}\nonumber\\
&&+\frac{(-1)^{k}}{u^{2k}}\int_{0}^{\infty}\cos(uw)\left(\frac{e^{-w^{4}}\erfi(w^{2})}{w^{2}}\right)^{(2k)}dw\nonumber\\
&&(\mbox{by }\eqref{h9}\mbox{ and }\eqref{h11}),\nonumber\\
&&=\frac{(-1)^{k}}{u^{2k}}\int_{0}^{\infty}\cos(uw)\left(\frac{e^{-w^{4}}\erfi(w^{2})}{w^{2}}\right)^{(2k)}dw,\nonumber\\
&&=\frac{(-1)^{k+2}}{u^{2k+1}}\sin(uw)\left.\left(\frac{e^{-w^{4}}\erfi(w^{2})}{w^{2}}\right)^{(2k)}\right\vert_{0}^{\infty}\nonumber\\
&&+\frac{(-1)^{k+1}}{u^{2k+1}}\int_{0}^{\infty}\sin(uw)\left(\frac{e^{-w^{4}}\erfi(w^{2})}{w^{2}}\right)^{(2k+1)}dw\nonumber\\
&&(\mbox{by }\eqref{h9}\mbox{ and }\eqref{h11}),\nonumber
\end{eqnarray}

\begin{eqnarray}
\label{h10}&&=\frac{(-1)^{k+1}}{u^{2k+1}}\int_{0}^{\infty}\sin(uw)\left(\frac{e^{-w^{4}}\erfi(w^{2})}{w^{2}}\right)^{(2k+1)}dw.
\end{eqnarray}

Clearly, the function $\frac{e^{-w^{4}}\erfi(w^{2})}{w^{2}}$ is smooth on $(0, \infty)$, since it is the quotient of smooth functions and the denominator does not vanish. This together with \eqref{h9} and \eqref{h11}  imply  that the integrand in \eqref{h10} is bounded in $L^{1}((0,\infty))$ uniformly with respect to $u$. Hence \eqref{h10} tends to zero as $u\rightarrow\infty$.
\end{proof}

\begin{proof}[Proof of Lemma \ref{lem:h3}] 
As noted earlier, it suffices to prove \eqref{j26} and \eqref{j26n}, namely
\begin{equation}
\lim_{x\rightarrow\infty}y_{1}(t,x)=\lim_{x\rightarrow\infty}y_{2}(t,x)=0,\quad \mbox{ for fixed }\ t>0,\nonumber
\end{equation}
and that 
\begin{equation}
\lim_{t\rightarrow 0}y_{1}(t,x)=\lim_{t\rightarrow 0}y_{2}(t,x)=0,\quad \mbox{ for fixed }\ x>0.\nonumber
\end{equation}
In \cite{Martin2009} it is proved that $\tilde{y}_{1}(t,x)$,  which is defined in \eqref{h3a}, may be expressed as
\begin{equation}\label{h12}
(Bt)^{1/4}\sum_{n=0}^{\infty}a_{n}u^{n},\quad u=\frac{x}{(Bt)^{1/4}},
\end{equation}
where
\begin{equation}
a_{0}=-\frac{2}{\pi}\Gamma\left(\frac{3}{4}\right),\quad a_{1}=1,\quad a_{2}=-\frac{1}{4\pi}\Gamma\left(\frac{1}{4}\right),\quad a_{3}=0.\nonumber
\end{equation}
Since $\frac{\tilde{y}_{1}}{(Bt)^{1/4}}$ is a solution of the fourth order linear ordinary differential equation \eqref{m34}, the recursion relation
\begin{equation}\label{h103}
a_{n+4}=\frac{n-1}{4(n+1)(n+2)(n+3)(n+4)}a_{n},
\end{equation}
then determines the coefficients $a_{n}$ for $n\geq 4$. 

Note that $y_{1}$ and $y_{2}$, which are defined in \eqref{d20}, are also of the form \eqref{h12} and have coefficients that satisfy \eqref{h103}. Hence, we find
\begin{equation}\label{msol2}
\tilde{y}_{1}(t,x)=\frac{1}{\sqrt{2}}y_{1}(t,x)-\frac{1}{\sqrt{2}}y_{2}(t,x)
\end{equation}
by comparing the first four coefficients. Since $\tilde{y}_{1}(t,x)$ corresponds to Mullins' solution \cite{Mullins1957}, \eqref{msol2} implies \eqref{msol}. Similarly \eqref{asol} can be demonstrated directly by comparing the first four coefficients in the solution given in \cite{Amram2014} and in $-\frac{m}{\sqrt{2}}y_{2}(t,x)$, as implied by \eqref{d20}.

Next, we want to express $\tilde{y}_{2}(t,x)$ as
\begin{equation}\label{j19}
\tilde{y}_{2}(t,x)=\sum_{i=1}^{4}\alpha_{i}y_{i}(t,x),
\end{equation}
for $\alpha_{i}\in\mathbb{R}$, $i=1,2,3,4$.  Let us recall that we obtained $\tilde{y}_{2}(t,x)$ by solving \eqref{m22}-\eqref{lic}, under the assumption that $\tilde{y}_{2}(t,x)$ is of the form \eqref{lt4} with
\begin{equation}
\label{ic1}Z'(0)=0,\quad Z'''(0)=1.
\end{equation}
The conditions in \eqref{ic1} imply the following equalities
\begin{equation}\label{j5}
\frac{\alpha_{1}-\alpha_{2}+\alpha_{3}+\alpha_{4}}{\sqrt{2}}=Z'(0)=0,
\end{equation}
\begin{equation}\label{j30}
\frac{\alpha_{1}}{\sqrt{2}\Gamma\left(\frac{1}{2}\right)}+\frac{\alpha_{2}}{\sqrt{2}\Gamma\left(\frac{1}{2}\right)}+\frac{\alpha_{3}}{\sqrt{2}\Gamma\left(\frac{1}{2}\right)}-\frac{\alpha_{4}}{\sqrt{2}\Gamma\left(\frac{1}{2}\right)}=Z'''(0)=1.
\end{equation}

We can calculate $\tilde{y}_{2}(t,0)$ and $\frac{\partial^{2}\tilde{y}_{2}}{\partial x^{2}}(t,0)$ directly from \eqref{h3b}. Using \eqref{h6}, \eqref{erfi:small}, as well as the series expansions for $e^{-w^{4}}$ and $\prescript{}{2}{F}_{2}^{}$, we obtain that
\begin{eqnarray}
\tilde{y}_{2}(t,0)&=&\frac{2(Bt)^{1/4}}{\sqrt{\pi}}\int_{0}^{\infty}e^{-w^{4}}w^{-2}\erfi(w^{2})dw\nonumber\\
&=&\frac{4(Bt)^{1/4}w}{\pi}\prescript{}{2}{F}_{2}^{}(\frac{1}{4},1;\frac{5}{4},\frac{3}{2};-w^{4})\bigg|_{0}^{\infty}\nonumber\\
\label{nh1}&=&\frac{2}{\sqrt{\pi}}(Bt)^{1/4}\Gamma\left(\frac{3}{4}\right),
\end{eqnarray}
and
\begin{eqnarray}
\frac{\partial^{2}\tilde{y}_{2}}{\partial x^{2}}(t,0)&=&-\frac{2(Bt)^{-1/4}}{\sqrt{\pi}}\int_{0}^{\infty}e^{-w^{4}}\erfi(w^{2})dw\nonumber\\
&=&-\frac{4(Bt)^{-1/4}w^{3}}{3\pi}\prescript{}{2}{F}_{2}^{}(\frac{3}{4},1;\frac{3}{2},\frac{7}{4};-w^{4})\bigg|_{0}^{\infty}\nonumber\\
\label{nh3}&=&-\frac{2}{\sqrt{\pi}}(Bt)^{-1/4}\Gamma\left(\frac{5}{4}\right),
\end{eqnarray}
where the asymptotic evaluations of $\prescript{}{2}{F}_{2}^{}$ can be found in \cite[16.11(ii)]{NIST}. Both \eqref{nh1} and \eqref{nh3} can be verified by Mathematica. Combining \eqref{nh1} and \eqref{nh3} with \eqref{j19}, we get
\begin{equation}\label{j6}
\frac{\alpha_{2}(Bt)^{1/4}}{\Gamma\left(\frac{5}{4}\right)}+\frac{\alpha_{4}(Bt)^{1/4}}{\Gamma\left(\frac{5}{4}\right)}=\tilde{y}_{2}(t,0)=\frac{2}{\sqrt{\pi}}(Bt)^{1/4}\Gamma\left(\frac{3}{4}\right),
\end{equation}
\begin{equation}\label{j31}
-\frac{\alpha_{1}(Bt)^{-1/4}}{\Gamma\left(\frac{3}{4}\right)}+\frac{\alpha_{3}(Bt)^{-1/4}}{\Gamma\left(\frac{3}{4}\right)}=\frac{\partial^{2}\tilde{y}_{2}}{\partial x^{2}}(t,0)=-\frac{2}{\sqrt{\pi}}(Bt)^{-1/4}\Gamma\left(\frac{5}{4}\right).
\end{equation}

Solving \eqref{j5}, \eqref{j30}, \eqref{j6} and \eqref{j31}, we get
\begin{equation}
\alpha_{1}=\alpha_{2}=\frac{\sqrt{\pi}}{\sqrt{2}},\quad \alpha_{3}=\alpha_{4}=0.\nonumber
\end{equation}
Thus for $x>0$
\begin{equation}\label{fcsol2}
\tilde{y}_{2}(t,x)=\sqrt{\pi}\left(\frac{1}{\sqrt{2}}y_{1}(t,x)+\frac{1}{\sqrt{2}}y_{2}(t,x)\right).
\end{equation}

From \eqref{msol2}, \eqref{fcsol2}, it follows that
\begin{eqnarray}
&&y_{1}(t,x)=\frac{1}{\sqrt{2}}\tilde{y}_{1}(t,x)+\frac{1}{\sqrt{2\pi}}\tilde{y}_{2}(t,x),\nonumber\\
&&y_{2}(t,x)=-\frac{1}{\sqrt{2}}\tilde{y}_{1}(t,x)+\frac{1}{\sqrt{2\pi}}\tilde{y}_{2}(t,x),\nonumber
\end{eqnarray}
and the initial and far field properties of $y_{1}(t,x)$, $y_{2}(t,x)$ are implied by the results in Lemma \ref{lem:h1} and \ref{lem:h2}.
\end{proof}

\section*{Acknowledgments} The authors would like to thank Prof.~Eugen Rabkin for fruitful discussions regarding the experimental data, \cite{Amram2014}, that he generously shared. The authors would also like to thank Dr.~Orestis Vantzos for developing the code for the data fitting. 

The authors would like to acknowledge support from the Israel Science Foundation (Grant $\#1200/16$).

\bibliography{bibl_hvk14}
\bibliographystyle{amsplain}

\end{document}